\crefname{equation}{}{}
\crefname{lem}{lemma}{lemmas}
\Crefname{lem}{Lemma}{Lemmas}
\newcommand{\bx}{{\boldsymbol x}}
\newcommand{\R}{{\mathbb R}}
\newcommand{\C}{{\mathbb C}}
\newcommand{\bmu}{{\boldsymbol \mu}}
\newcommand{\x}{{\boldsymbol x}}
\newcommand{\wcomp}{w_{\text{comp}}}
\newcommand{\eps}{\varepsilon}
\newcommand{\bA}{{\mathbf{A}}}
\newcommand{\y}{{\boldsymbol y}}
\newcommand{\by}{{\boldsymbol y}}
\newcommand{\br}{{\boldsymbol r}}
\newcommand{\bn}{{\boldsymbol n}}
\newcommand{\bl}{{\boldsymbol \ell}}
\newcommand{\bu}{{\boldsymbol u}}
\newcommand{\bs}{{\boldsymbol s}}
\newcommand{\bt}{{\boldsymbol t}}
\newcommand{\bB}{{\mathbf B}}
\newcommand{\bb}{{\boldsymbol b}}
\newcommand{\bc}{{\boldsymbol c}}
\newcommand{\bh}{{\boldsymbol h}}
\newcommand{\bzero}{{\boldsymbol 0}}
\newtheorem{remark}{Remark}
\newtheorem{lem}{Lemma}
\newtheorem{proposition}{Proposition}
\theoremstyle{definition}
\journal{J. Comput. Phys.}
\title{Integral equation formulation of the biharmonic Dirichlet
problem}
\author[yale]{M.~Rachh\corref{cor1}}
\ead{manas.rachh@yale.edu}
\author[uw]{T.~Askham}
\ead{askham@uw.edu}
\address[yale]{Applied Mathematics Program, Yale University,
New Haven, CT 06511}
\address[uw]{Department of Applied Mathematics, University 
of Washington, Seattle, WA 98195-3925}
\begin{document}

\begin{abstract}
We present a novel integral representation for the
biharmonic Dirichlet problem.
To obtain the representation, the Dirichlet
problem is first converted into a related Stokes problem for 
which the Sherman-Lauricella integral representation can
be used. 
Not all potentials for the Dirichlet problem correspond to 
a potential for Stokes flow, and vice-versa, but we show that 
the integral representation can be augmented and modified 
to handle either simply or multiply connected domains.
The resulting integral representation has a kernel which
behaves better on domains with high curvature than 
existing representations. 
Thus, this representation results in more 
robust computational methods for the solution of the
Dirichlet problem of the biharmonic equation and we demonstrate
this with several numerical examples.
\end{abstract}

\begin{keyword}
integral equations \sep biharmonic \sep Dirichlet 
\sep multiply connected
\end{keyword}

\maketitle


\section{Introduction and problem formulation}

A variety of problems of mathematics and physics require the 
computation of a biharmonic potential subject to Dirichlet boundary 
conditions. The pure bending problem for an isotropic and
homogeneous thin clamped plate is a classical application. Another 
application is the computation of a $C^1$ extension of a given 
function from its domain of definition to a larger, enclosing domain
(we discuss these applications further in \cref{sec:apps}). 

The Dirichlet problem is given as follows. For a 
domain $D$ with boundary $\Gamma$, find a function 
$w$ such that

\begin{align}
 \Delta^2 w &= 0 \mbox{ in } D \label{eq:biharmD1} \; ,\\
 w &= f \mbox{ on } \Gamma \label{eq:biharmD2} \; ,\\ 
 \frac{\partial w}{\partial n} &= g \mbox{ on } \partial 
 D, \label{eq:biharmD3}
\end{align}
where $f$ and $g$ are continuous functions defined on 
$\Gamma$. 

The use of standard finite difference methods 
for the solution of \cref{eq:biharmD1,eq:biharmD2,eq:biharmD3} is complicated greatly 
by the fact that the differential equation 
is fourth order. For instance, the
resulting linear system for a discretization with $N$ nodes in 
each dimension
would have a condition number proportional to $N^4$, which  
poses several concerns for obtaining high accuracy solutions for large problems.

Integral equation methods, on the other hand,
have many advantages for such problems. 
Because \cref{eq:biharmD1,eq:biharmD2,eq:biharmD3}
is homogeneous, the resulting integral equation is defined on the 
boundary alone and there is a reduction in the dimension of the
problem. Complex geometries are handled more easily by an 
integral equation and, with appropriate choice of representation, the 
discrete problem tends to be as well conditioned as the underlying 
physical problem, independent of the system size 
\cite{kress1999linear}. 
One challenge 
for integral equation methods
is that the resulting linear systems are dense. However, there
are many well developed fast algorithms for the solution of these
systems, most descending from the fast multipole method (FMM)
\cite{greengard1987}. 

Integral representations for the solution of \cref{eq:biharmD1,eq:biharmD2,eq:biharmD3} have been
developed previously. In particular, the problem is addressed in 
Peter Farkas' thesis \cite{Farkas89} and the method presented there
has been extended to three dimensions in \cite{Jiang11}. The integral
kernels derived in \cite{Farkas89} are taken to be linear 
combinations of derivatives of the fundamental solution of the 
biharmonic problem. Assuming the boundary is a smooth curve, the 
combinations are chosen to maximize the smoothness of the integral
kernel as a function on the boundary (for smooth domains). 
However, the integral kernels derived for \cref{eq:biharmD1,eq:biharmD2,eq:biharmD3} have
a leading order singularity of $r^{-2}$ on a domain with a corner.
Because of this singularity, designing quadrature rules for discretizing
the integral equation is difficult for domains with corners.
Furthermore, the
resulting discretized system has large condition numbers 
for domains whose boundaries have high curvature.

For the related problem of two dimensional steady Stokes flow,
the stream function formulation results in a biharmonic equation
with the gradient of the biharmonic potential specified on
the boundary. Let $w$ be the stream function for Stokes flow 
with no slip boundary conditions, then 
\begin{align}
 \Delta^2 w &= 0 \mbox{ in } D \; , \label{eq:biharmV1}\\
 \frac{\partial w}{\partial \tau} &= f \mbox{ on }
 \Gamma \label{eq:biharmV2} \; , \\ 
 \frac{\partial w}{\partial n} &= g \mbox{ on } 
 \Gamma \, , \label{eq:biharmV3}
\end{align}
for appropriately chosen functions $f$ and $g$. 
Over the past century, much work has been done to develop integral
representations for the biharmonic problem in this setting, as well
as the similar setting of the Airy stress function formulation
of the plane theory of elasticity \cite{greengard1996integral,
power1993completed,power1987second, michlin1957integral}.
The representations given in the above references typically
have more benign singularities than the representation
presented in \cite{Farkas89}. In particular, the representation
used in this paper, taken from \cite{greengard1996integral,
power1993completed}, has a leading 
order singularity of $r^{-1}$ on domains with corners. Moreover, 
this representation (and others from the above references)
can be expressed in terms of  
Goursat functions, allowing for a convenient representation
of the stream function. Because of these advantages, we choose to adapt
the representation of \cite{greengard1996integral,power1993completed} 
to solve \cref{eq:biharmD1,eq:biharmD2,eq:biharmD3}.

This adaptation is not immediate. First, in two dimensional Stokes
flow, the physical quantities of
interest are derivatives of the biharmonic potential $w$ and not $w$
itself; the representation of $w$ from \cite{greengard1996integral,
power1993completed} is not 
necessarily single-valued. Second, in converting the boundary
conditions \cref{eq:biharmD2,eq:biharmD3} into the boundary conditions 
\cref{eq:biharmV2,eq:biharmV3}, the data is differentiated along the curve
so that the original boundary condition is only met up to a constant.
These issues are addressed here, with particular attention
paid to the case of multiply connected domains.
More precisely, we will show that the desired (and uniquely defined)
potential can be expressed in terms of (possibly) multi-valued
Goursat functions.

The rest of the paper is organized as follows. In \cref{sec:prelim}, we  
present some mathematical preliminaries, including the notation used
throughout the paper, a review of the Farkas integral representation,
and a review of the completed layer potential representation for solving 
\cref{eq:biharmV1,eq:biharmV2,eq:biharmV3} in terms of the Goursat
functions. In \cref{sec:anapp}, we explain how to adapt the Stokes
layer potentials for the Dirichlet problem, present an integral
representation for solving \cref{eq:biharmD1,eq:biharmD2,eq:biharmD3},
and prove the invertibility of the resulting integral
equation. We outline the numerical tools we used to solve
this integral equation and  present 
some numerical results in \cref{sec:results}. In \cref{sec:conclusion}, 
we provide some concluding remarks and ideas for future
research.

\section{Preliminaries} \label{sec:prelim}
The notation for the following concepts can be cumbersome and 
an attempt has been made to stay consistent. Vector-valued
quantities are denoted by bold, lower-case letters
(e.g. $\bh$), while tensor-valued quantities are bold
and upper-case (e.g. $\mathbf{T}$). Subscript indices of the
non-bold character (e.g. $h_i$ or $T_{ijk}$)
are used to denote the entries within a vector or tensor.
We use the standard Einstein summation convention, i.e., 
there is an implied sum taken over the repeated indices of 
any term. The vectors $\bx$ and
$\by$ are reserved for spatial variables in $\R^2$, while
$z$ and $\xi$ are reserved for spatial variables in $\C$. 
We make the standard identification between points
in $\R^2$ and points in $\C$, i.e. the point 
$\bx = (x_1,x_2)^\intercal \in \R^2$ is equivalent to
the point $z = x_1+i x_2$, and we switch between the
two notions implicitly in much of what follows.
For integration, the symbol $dS$ is used to denote an 
integral with respect to arc length and the symbol $d\xi$ is
used to denote a complex contour integral. Script letters
$\mathcal{X}$, $\mathcal{Y}$, and $\mathcal{Z}$ are reserved for Banach spaces.
$I_{\mathcal{X}}: \mathcal{X}\to\mathcal{X}$
denotes the identity operator on $\mathcal{X}$.

Let $D$ will denote a bounded, possibly multiply-connected,
domain in $\R^2$ with a smooth boundary $\Gamma$ (unless otherwise
noted). For a domain with $N$
holes, we will denote the outer boundary by $\Gamma_0$ and the
boundary of each hole by $\Gamma_1, \ldots, \Gamma_N$, so that
$\Gamma = \cup_{i=0}^N \Gamma_i$. Let $\bn(\bx)$ denote the outward
unit normal and $\boldsymbol{\tau}(\bx)$ the positively-oriented
unit tangent for $\bx \in \Gamma$. 
If we need to distinguish between the exterior
and interior of $\Gamma$, we will let $D^- = D$ denote the
interior and $D^+ = \R^2 \setminus (D \cup \Gamma)$ denote
the exterior.

\subsection{Applications of the biharmonic Dirichlet problem}
\label{sec:apps}

Consider the pure bending of an isotropic and homogeneous thin 
clamped plate. In the Kirchoff-Love theory, the vertical displacement 
of the plate, $w$, satisfies the equations
\begin{align}
 -\Delta^2 w =  q &\quad \bx \in D \\
 w = 0 &\quad \bx \in \Gamma \\ 
 \frac{\partial w}{\partial n} = 0 &\quad \bx \in \Gamma, 
\end{align}
where $D \subset \mathbb{R}^2$ represents the midline of the thin 
plate, $\Gamma$ is its boundary, and $q$ is the transverse load 
applied to the plate. Using standard techniques, the above problem 
can be reduced to a homogeneous biharmonic problem of the form
\cref{eq:biharmD1,eq:biharmD2,eq:biharmD3}.

In a recent paper, \cite{Askham2017}, it was shown that the 
solution of polyharmonic
Dirichlet problems can be used 
as part of the solution of \textit{inhomogeneous} PDEs 
on complex geometries. We will breifly review this procedure
here. 

Consider the Poisson equation

\begin{align}
  \Delta u &= f &\bx \in D \; \label{eq:poiss1}, \\
  u &= g &\bx \in \Gamma \; .
\end{align}
A particular solution, $v$, which satisfies \cref{eq:poiss1}
can be obtained from the formula 

\begin{equation}
  v(\x) = -\dfrac1{2\pi} \int_\Omega \log |\x-\y| 
  \tilde{f}(\y) \, dy \; , \label{eq:volint}
\end{equation}
where $\Omega$ is some domain such that $D\subset \Omega$ and
$\tilde{f}$ is a function defined on $\Omega$ which 
satisfies $\tilde{f}|_D = f$. 

There are rapid methods for evaluating the integral 
\cref{eq:volint} in the case that $\Omega$ is a box.
However, it is unclear how best to define (and compute)
the values of $\tilde{f}$ on $\Omega \setminus D$, particularly
such that $\tilde{f}$ is smooth across the boundary of
$D$. One approach is to
compute the extension as the solution of a homogeneous
PDE on the exterior. 

Suppose that $w$ solves

\begin{align}
 \Delta^2 w = 0 &\quad \bx \in \Omega \setminus D 
\nonumber\\
 w = f &\quad \bx \in \Gamma \nonumber \\ 
 \frac{\partial w}{\partial n} =\frac{\partial f}{\partial n}  
&\quad \bx \in \Gamma  \label{eq:biharm2}\\
 w = 0 &\quad \bx \in \partial \Omega \nonumber\\
 \frac{\partial w}{\partial n} = 0 &\quad \bx \in \partial 
\Omega \nonumber \; ,
\end{align}
which is a problem of the form 
\cref{eq:biharmD1,eq:biharmD2,eq:biharmD3}.
Then, setting $\tilde{f}|_D = f$ and $\tilde{f}|_{\Omega \setminus D} = w$ 
makes $\tilde{f}$ a $C^1$ function across $\Gamma$. 

In \cite{Askham2017}, a $C^0$ extension was computed
as the solution of a Laplace problem on $\Omega \setminus D$.
This was found to accelerate the convergence of the
Poisson solver over discontinuous extension (i.e. $\tilde{f}$ 
to be zero outside of $D$).
By computing a smoother extension, as in the solution of the
problem above, the efficiency and robustness
of the Poisson solver could be further improved.
For a PDE-based version of this approach, see 
\cite{stein2016}.

\subsection{The Farkas integral representation} 

As mentioned in the introduction, there are existing 
integral representations for the solution of 
\cref{eq:biharmD1,eq:biharmD2,eq:biharmD3}. In \cite{Farkas89}, 
the solution is given as the sum of two layer potentials, i.e.

\begin{equation}
  w(\x) = \int_\Gamma K_1^F (\x,\y) \sigma_1(\y) \, dS(\y)
  + \int_\Gamma K_2^F(\x,\y) \sigma_2(\y) \, dS(\y) \; ,
  \label{eq:farkRep}
\end{equation}
where $\sigma_1$ and $\sigma_2$ are unknown densities.

The integral kernels, $K_1^F$ and $K_2^F$ are based on
derivatives of the Green's function for the biharmonic equation.
For two points on the plane, $\x$ and $\y$, the Green's function
is given by

\begin{equation}
  G^B(\x,\y) = \dfrac1{8\pi} |\x-\y|^2 \log |\x-\y| \; .
\end{equation}
Let $\br = \y - \x$ and $r = |\y-\x|$. Then,

\begin{align}
  K_1^F(\x,\y) &= G^B_{n_yn_yn_y}(\x,\y)
  + 3 G^B_{n_y\tau_y\tau_y}(\x,\y) \; , \label{eq:kf1} \\
  K_2^F(\x,\y) &= -G^B_{n_yn_y}(\x,\y)
  + G^B_{\tau_y\tau_y}(\x,\y) \label{eq:kf2} \; .
\end{align}
More explicitly, we have

\begin{align}
  K_1^F(\x,\y) &= \dfrac1{\pi}
  \dfrac{(\br\cdot \bn (\y))^3}{(\br\cdot \br)^2} \; ,\\
  K_2^F(\x,\y) &= \dfrac1{2\pi} \left ( \dfrac12 - 
  \dfrac{(\br\cdot \bn (\y))^2}{\br\cdot \br} \right ) \; .
\end{align}

On enforcing the Dirichlet boundary conditions for
$w$, we obtain the integral equation

\begin{equation}
  \begin{split}
    \begin{pmatrix}
      f(\x) \\
      g(\x)
    \end{pmatrix} &= 
    \int_\Gamma
    \begin{pmatrix}
      K_{11}^F(\x,\y) & K_{12}^F(\x,\y) \\
      K_{21}^F(\x,\y) & K_{22}^F(\x,\y)
    \end{pmatrix}
    \begin{pmatrix}
      \sigma_1(\y)\\
      \sigma_2(\y)
    \end{pmatrix}
    \, dS(\y) \\
    &\quad + \begin{pmatrix}
      1/2 & 0 \\
      -\kappa(\x) & 1/2
    \end{pmatrix}
    \begin{pmatrix}
      \sigma_1(\x)\\
      \sigma_2(\x)
    \end{pmatrix} \; , \label{eq:farkIntEq} 
  \end{split}
\end{equation}
where $\kappa$ denotes the signed curvature as
a function on $\Gamma$ and $\x$ is a point on $\Gamma$. 
The kernels are given
by $K_{11}^F = K_1^F$, $K_{12}^F = K_2^F$,
\begin{align}
  K_{21}^F(\x,\y) &= \left(K_1^F(\x,\y)\right )_{n_x} \nonumber \\
  &= \dfrac1{\pi} \left ( -3 \dfrac{ (\br \cdot \bn(\y))^2
    (\bn(\x) \cdot \bn(\y))}{(\br \cdot \br)^2}
  +4 \dfrac{ (\br \cdot \bn(\y))^3 (\br \cdot \bn(\x))}
  { (\br \cdot \br)^3} \right )  \; \label{eq:farkK21},\\
  K_{22}^F(\x,\y) &= \left(K_2^F(\x,\y)\right )_{n_x} \nonumber \\
  &= \dfrac1{\pi} \left ( \dfrac{ (\br \cdot \bn(\y)) (\br \cdot \bn(\x))}
  {\br\cdot\br} - \dfrac{(\br \cdot \bn(\y))^2 (\br\cdot \bn(\x))}
  {(\br\cdot\br)^2} \right ) \; .
\end{align}
For a sufficiently smooth and simply connected domain,
the integral equation \cref{eq:farkIntEq} is invertible.
The case of a multiply connected domain is not treated
fully in \cite{Farkas89}, but some of the issues are
considered.

As mentioned above, the kernels $K^F_1$ and $K^F_2$ are
constructed with the goal that the $K^F_{ij}$ are
as smooth as possible. Suppose that the boundary
$\Gamma$ is of class $C^k$. Then, the kernels,
$K^F_{ij}(\x,\y)$, are $C^{k-2}$ functions on the boundary
for each $\y \in \Gamma$ \cite{Farkas89}.
Therefore, on a smooth
boundary, these kernels are smooth. However, on a
domain with a corner, it is clear from the
formula \cref{eq:farkK21} that the kernel $K_{21}^F$
has a singularity with strength $r^{-2}$. This singularity, 
in addition to the term in \cref{eq:farkIntEq}
which explicitly involves the
curvature, makes the representation \cref{eq:farkRep}
unstable for domains with high curvature (or
corners).

\subsection{Stokes flow in the plane} 
\label{subsec:stokesflow}

The equations of incompressible Stokes flow
with no-slip boundary conditions on a domain 
$D$ with boundary $\Gamma$ are
\begin{align}
-\Delta\bu+\nabla p & =0\quad\mbox{in D},
\label{eq:StokesFlowEq} \\
\nabla\cdot\bu & =0\quad\mbox{in D},
\label{eq:MassConservation}\\
\bu &= \bh \quad\text{on }\Gamma, 
\label{eq:bcStokesFlow}
\end{align}
where $\bu$ is the velocity of the fluid and $p$
is the pressure. Following
standard practice, the velocity $\bu$ can be 
represented by a stream function $w$. Let
\begin{equation}
\bu = \nabla^{\perp} w = \left( 
\begin{array}{r} \frac{\partial w}{\partial x_2} \\ 
-\frac{\partial w}{\partial x_1} \end{array} \right) \label{eq:StreamFunc},
\end{equation}
so that the divergence-free condition, 
\cref{eq:MassConservation}, is satisfied automatically. 
Taking the dot product of $\nabla^\perp$ with 
\cref{eq:StokesFlowEq} results in a biharmonic equation
for $w$. In particular, $w$ is a biharmonic function 
which satisfies \cref{eq:biharmV1,eq:biharmV2,eq:biharmV3}
with $f = -h_i n_i$ and $g = h_i \tau_i$.

\subsection{Goursat functions}

Goursat showed that any biharmonic function $w$ 
can be represented by two analytic functions $\phi$ and $\psi$ 
(called Goursat functions) as
\begin{equation}
 w \left(x_1,x_2\right) = \text{Re} \left(\bar{z} \phi \left(z \right) 
+ \chi \left(z\right) \right) , \label{eq:wGoursat}
\end{equation}
where $\chi^{'} = \psi$ and $z=x_1+i x_2$ \cite{goursat1898equation}. 
In solving equation 
\cref{eq:biharmV1,eq:biharmV2,eq:biharmV3}, we are interested in 
$\frac{\partial w}{\partial x_1}$ and $\frac{\partial w}{\partial x_2}$. 
Muskhelishvili's formula \cite{muskhelishvili1977some} gives an expression
for these quantites in terms of the Goursat functions as
\begin{equation}
 \frac{\partial w}{\partial x_1}+ i \frac{\partial w}{\partial x_2} = 
\phi\left(z\right) + z\overline{\phi^{'} \left(z\right)} + 
\overline{\psi \left(z\right)} \, . \label{eq:MushkiFormula}
\end{equation}
We say that a pair of Goursat functions $\phi$ and $\psi$
is \textit{equivalent} to a 
Stokes velocity field $\bu$ if the biharmonic function $w$ 
defined by \cref{eq:wGoursat} is such that $\bu = 
\nabla^\perp w$.

The references \cite{greengard1996integral,power1993completed,
power1987second, michlin1957integral} give many options for the 
representation
of $\phi$ and $\psi$ as layer potentials of a complex 
density given on the boundary of the domain. 
Of computational interest, representations
for $\phi$ and $\psi$ exist
such that enforcing the boundary conditions of \cref{eq:biharmV1,eq:biharmV2,eq:biharmV3}
results in an invertible second-kind integral equation (SKIE) 
for the density.

\subsection{Integral representations for Stokes flow in the plane}

We will first present
the single layer and double layer potentials of Stokes flow
in the Stokeslet/stresslet formulation, which may be more
familiar. For details concerning these ideas, see, inter alia, 
\cite{pozrikidis1992boundary}.
We will then present their equivalent potentials
in the classical Goursat function formulation. The 
reason for doing so is two-fold: first, the Goursat
formulation makes it more natural to evaluate the stream
function $w$; second, the complex variables-based Goursat
formulation is readily adaptable for efficient fast multipole
methods.

\subsubsection{Stokes layer potentials} \label{sec:stokeslayer}

The Green's function $\mathbf{G}$ for the incompressible Stokes 
equations in free space, or \textit{Stokeslet}, is given by 
\begin{equation}
G_{ij}\left(\bx,\by\right)=\frac{1}{4\pi}\left[ 
-\log\left|\bx-\by\right| \delta_{ij} + 
\frac{\left( x_{i} - y_{i} \right) \left( x_{j} - y_{j}
 \right)} {\left| \bx - \by \right|^2}\right] \quad 
i,j \in \{1,2\} \, .
\end{equation}
The vector field $u_i = G_{ij}\left(\bx,\by
\right) f_j$ represents a Stokes velocity field 
at $\bx$ due to a point force 
$\mathbf{f}$ applied at $\by$.
For a continuous distribution of surface forces $\boldsymbol\mu$ on
a curve $\Gamma$, the induced Stokes field, called a single layer 
potential, is given by 
\begin{equation}
\label{stokesslpdef}
\left [ {S}_{\Gamma}  \mu \right ]_i 
\left(\bx\right)=
\int_{\Gamma} G_{ij} \left(\bx,\by\right)
\mu_{j}\left(\by\right) \, dS(\by) \quad i=1,2 \, . 
\end{equation}

The following lemma describes the behavior of the Stokes single
layer potential as a function on $\R^2$, see 
\cite{pozrikidis1992boundary} for details.
\begin{lem}
\label{lem:slppropspart1}
Let 
$\mathbf{S}_{\Gamma}\mathbf{\bmu}(\bx)$ denote a single layer Stokes potential 
of the form \cref{stokesslpdef}. Then,
$\mathbf{S}_{\Gamma}\bmu\left(\bx\right)$ satisfies 
the Stokes equations
in $\R^{2}\backslash\Gamma$ and
$\mathbf{S}_{\Gamma}\bmu(\bx)$ is continuous
in $\R^{2}$.
\end{lem}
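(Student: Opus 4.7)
The plan is to handle the two assertions separately, relying on the facts that each component $G_{ij}(\cdot,\by)$ solves the Stokes equations away from $\by$ and that the Stokeslet has only a logarithmic singularity at $\bx=\by$.

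For the Stokes equations in $\R^2\setminus\Gamma$, I would fix any $\bx_0\in\R^2\setminus\Gamma$. Since $\Gamma$ is compact, there is a neighborhood $U$ of $\bx_0$ on which the integrand of \cref{stokesslpdef} is $C^\infty$ jointly in $(\bx,\by)$, uniformly in $\by\in\Gamma$, so differentiation under the integral is justified to any order. A direct computation from the explicit form of $G_{ij}$ gives $\partial_{x_i} G_{ij}(\bx,\by)=0$ (the two contributions from the $\log$ and the tensor pieces cancel exactly) and $-\Delta_\bx G_{ij}(\bx,\by) + \partial_{x_j} P_i(\bx,\by)=0$ for $\bx\neq\by$, where $P_i$ is the pressure Stokeslet. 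Defining the associated pressure
\begin{equation*}
p(\bx) = \int_\Gamma P_j(\bx,\by)\mu_j(\by)\,dS(\by),
\end{equation*}
linearity then yields that $(\mathbf{S}_\Gamma\bmu,p)$ satisfies \cref{eq:StokesFlowEq,eq:MassConservation} throughout $\R^2\setminus\Gamma$.

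For continuity on $\R^2$, the only nontrivial case is the limit $\bx\to\bx_0\in\Gamma$, since away from $\Gamma$ the preceding paragraph already gives smoothness. The tensor term $(x_i-y_i)(x_j-y_j)/|\bx-\by|^2$ is bounded in absolute value by $1$, and $-\log|\bx-\by|\,\delta_{ij}$ is weakly singular, hence integrable along the smooth arc. Componentwise, I would run the standard argument used for continuity of the Laplace single layer: given $\epsilon>0$, pick a small arc $\Gamma_\delta\subset\Gamma$ around $\bx_0$ and split $\mathbf{S}_\Gamma\bmu = \mathbf{S}_{\Gamma_\delta}\bmu + \mathbf{S}_{\Gamma\setminus\Gamma_\delta}\bmu$. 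The near-field piece is bounded uniformly in $\bx$ in a neighborhood of $\bx_0$ by the weak-singularity estimate, while the far-field piece is continuous at $\bx_0$ by dominated convergence (the integrand is continuous there and uniformly bounded).

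The main obstacle is showing that the near-field contribution can be made arbitrarily small uniformly in $\bx$ as $\bx$ approaches $\bx_0$ from either side of $\Gamma$ (and on $\Gamma$ itself). This is handled by parametrizing $\Gamma_\delta$ by arc length, reducing to a 1D integral of the form $\int_{-\delta}^{\delta} |\log r|\,ds$ for $r$ the distance from $\bx$ to the arc, and using smoothness of $\Gamma$ to bound $r$ below by a comparable quantity in the arc length parameter; the resulting integral tends to $0$ with $\delta$ uniformly in $\bx$ near $\bx_0$, yielding continuity of $\mathbf{S}_\Gamma\bmu$ across $\Gamma$.
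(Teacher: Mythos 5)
Your argument is sound, but note that the paper itself does not prove this lemma at all: it simply records the properties and refers the reader to Pozrikidis's monograph for details, so there is no in-paper proof to compare against. Your sketch supplies the standard direct verification that the citation is standing in for. The two halves are both correct: away from $\Gamma$, smoothness of the kernel on a neighborhood of a fixed $\bx_0\notin\Gamma$ (with $\by$ ranging over the compact curve) justifies differentiating under the integral, the divergence of the Stokeslet in $\bx$ vanishes identically by the cancellation you describe, and the momentum equation is inherited from the pointwise identity satisfied by the Stokeslet/pressure-Stokeslet pair --- though watch your indices there: with your definition $p(\bx)=\int_\Gamma P_j(\bx,\by)\mu_j(\by)\,dS(\by)$ the relevant identity is $-\Delta_\bx G_{ij}+\partial_{x_i}P_j=0$ for $\bx\neq\by$ (free index $i$, force index $j$), not $\partial_{x_j}P_i$; this is cosmetic given the symmetry of $\mathbf{G}$ but worth stating correctly. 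For continuity, your near-field/far-field splitting works precisely because both pieces of the kernel are weakly singular: the logarithmic part contributes $O(\delta\log(1/\delta))$ and the bounded tensor part $(x_i-y_i)(x_j-y_j)/|\bx-\by|^2$ contributes $O(\delta)$ on $\Gamma_\delta$, uniformly for $\bx$ near $\bx_0$, which is exactly why the single layer is continuous across $\Gamma$ while the double layer (whose kernel is not uniformly small in the near field for $\bx$ off the curve) acquires the jump recorded in the next lemma. In short: correct, more self-contained than the paper's appeal to the literature, at the cost of redoing classical potential-theory estimates.
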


The Stokes single layer potential has equivalent Goursat 
functions, $\phi_{S}$ and $\psi_{S}$, which can be expressed
in terms of complex layer potentials:
\begin{align}
\phi_{S}(z) & =-\frac{1}{8\pi}\int_{\Gamma}\rho\left(\xi\right)
\log\left(\xi-z\right)\, dS(\xi) +
                      \frac{1}{8\pi} \int_{\Gamma} \rho\left(\xi\right) 
                      \, dS(\xi) \, , \label{eq:SLGoursatPhi}\\
\chi_{S}(z) & =\frac{1}{8\pi}\int_{\Gamma}
\overline{\rho\left(\xi\right)}\left(\xi-z\right)
\left[\log\left(\xi-z\right)-1\right] \, dS(\xi) \nonumber \\ 
&\quad +\frac{1}{8\pi}\int_{\Gamma}\overline{\xi}\rho(\xi)\log
\left(\xi-z\right) \, dS(\xi) \, ,\label{eq:SLGoursatChi}\\
\psi_{S}(z) & =-\frac{1}{8\pi}\int_{\Gamma}
\overline{\rho\left(\xi\right)}\log\left(\xi-z\right) \, dS(\xi)-
\frac{1}{8\pi}\int_{\Gamma}\frac{\overline{\xi}\rho\left(\xi\right)}
     {\xi-z}\, dS(\xi) \, , \label{eq:SLGoursatPsi}
\end{align}
where $z=x_1 + ix_2$, $\xi = y_1 + iy_2$, and $\rho = \mu_2 - i \mu_1$. 
The stream function $w_S$ corresponding to this 
Goursat function pair is then
\begin{equation}
\begin{split}
 w_{S}(z)  &=\mbox{Re}\left[\frac{1}{4\pi}\int_{\Gamma}\mbox{Re}
\left[\overline{\left(\xi-z\right)}\rho\left(\xi\right)\right]
\log\left(\xi-z\right) \, dS(\xi) \right. \\ 
&\quad \left. -\frac{1}{8\pi}\int_{\Gamma}\overline{\rho
\left(\xi\right)}\left(\xi-z\right)\, dS(\xi) + \overline{z}\frac{1}{8\pi} 
\int_{\Gamma} 
\rho\left(\xi\right) \, dS(\xi) \right] \\
 & =: S^w_\Gamma \rho \, .  \label{eq:SLStreamFunc}
\end{split}
\end{equation}
Note that, the velocity field associated with the stream function 
$w_{S}$
is given by
\begin{equation}
\nabla^{\perp} w_{S} \left(z\right)= 
\nabla^{\perp} S^w_\Gamma \rho \left(z\right)
= \mathbf{S}_{\Gamma} \boldsymbol{\mu} \left(\bx\right) \, .
\label{eq:SLStreamFuncEquiv}
\end{equation}

Another quantity of physical interest in Stokes flow is the 
stress tensor $\boldsymbol\sigma$; for a Stokes velocity 
field $\bu$ and pressure $p$, it is given by
\begin{equation}
 \sigma_{ij} =-p\delta_{ij}+\left(\frac{\partial u_{i}}{\partial x_{j}}+
\frac{\partial u_{j}}{\partial x_{i}}\right) \, .
\end{equation}
The stress tensor $\mathbf{T}$, or \textit{stresslet}, 
associated with the 
Green's function $\mathbf{G}$ is given by
\begin{equation}
T_{ijk} \left(\bx,\by\right) = -\frac{1}{\pi} 
\frac{\left( x_{i} - y_{i} \right) \left( x_{j} - y_{j} \right) 
\left( x_{k} - y_{k} \right)}{\left| \bx - \by \right|^{4}}
\, .
\end{equation} 
The vector field $u_i = T_{ijk}(\by,\bx) n_k f_j$ 
represents the velocity field resulting from a stresslet with strength
$\bf$ oriented in the direction
$\bn$ at $\by$.
For a distribution of stresslets $\boldsymbol\mu$ on
a curve $\Gamma$, the induced Stokes field, called a double 
layer potential, is given by
\begin{equation}
\label{stokesdlpdef}
\left [ {D}_{\Gamma} {\mu} \right ]_i 
\left(\bx\right) =
\int_{\Gamma}
T_{ijk}\left(\by,\bx \right) n_k  
\left(\by\right) 
\mathbf{\mu}_j \left(\by\right) \, dS(\by)
\, .
\end{equation}

The following lemma describes the behavior of the Stokes double
layer potential as a function on $\R^2$, see 
\cite{pozrikidis1992boundary} for details.
\begin{lem} \label{lem:dlppropspart1}
Let $\Gamma$ be a curve, $D^+$ denote the exterior of the
curve, $D^-$ denote the interior, $\mathbf{D}_{\Gamma}\bmu
\left(\bx\right)$ denote a double layer Stokes
potential of the form \cref{stokesdlpdef}, and
$\x_0 \in \Gamma$. Then,
$\mathbf{D}_{\Gamma}{\bmu}\left(\bx\right)$ satisfies the
Stokes equations in $\R^{2}\backslash\Gamma$ and the jump relations:
\begin{align}
\lim_{\substack{\bx\to\bx_{0}\\
\bx\in D^{\pm}}} 
\left [ {D}_{\Gamma} {\mu}\right ]_{i} (\x) &=
\pm\frac{1}{2}\mathbf{\mu}_{i}
\left(\bx_{0}\right)+\oint_{\Gamma}{T}_{j,i,k} \left(\by, 
\bx_{0}\right) n_{k}\left(\by\right) {\mu_j}
\left(\by\right) \, dS(\y)  \\
&=: \pm\frac{1}{2}\mathbf{\mu}_{i}(\x_0) 
+ \left [ {D}_{\Gamma}^{PV}
{\mu} \right ]_{i} (\x_0) \, .
\end{align}
In the above, $\oint$ denotes a Cauchy principal value
integral and $\mathbf{D}_\Gamma^{PV} \bmu$ denotes the double layer
potential with the integral taken in the principal value sense.

\end{lem}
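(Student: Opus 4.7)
The plan is to prove the two claims separately, leveraging the classical structure of the stresslet kernel. For the first claim, I would observe that for fixed $\by \in \Gamma$, the map $\bx \mapsto T_{ijk}(\by,\bx) n_k(\by)$ is (component-wise) a smooth Stokes velocity field in $\R^2 \setminus \{\by\}$; explicitly, one verifies this by direct differentiation, using that the Stokeslet $\mathbf{G}$ solves the Stokes equations. Since for $\bx \notin \Gamma$ the integrand is smooth in $\by$ uniformly in a neighborhood of $\bx$, one may differentiate under the integral sign to conclude that $\mathbf{D}_{\Gamma} \bmu$ satisfies \cref{eq:StokesFlowEq,eq:MassConservation} on $\R^{2} \setminus \Gamma$.

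The jump relation is the main content of the lemma. Here I would use the standard ``subtract and add'' decomposition
\begin{equation*}
 [D_{\Gamma} \mu]_i(\bx) = \int_{\Gamma} T_{ijk}(\by,\bx) n_k(\by) \bigl(\mu_j(\by) - \mu_j(\bx_0)\bigr) \, dS(\by) + \mu_j(\bx_0) \int_{\Gamma} T_{ijk}(\by,\bx) n_k(\by) \, dS(\by),
\end{equation*}
which reduces the problem to analyzing the ``constant density'' integral and a remainder that has a milder singularity.

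For the constant-density term, the key input is the classical identity
\begin{equation*}
 \int_{\Gamma} T_{ijk}(\by,\bx) n_k(\by) \, dS(\by) = \begin{cases} -\delta_{ij}, & \bx \in D^{-}, \\ -\tfrac{1}{2}\delta_{ij}, & \bx \in \Gamma, \\ 0, & \bx \in D^{+}, \end{cases}
\end{equation*}
which one derives by applying the divergence theorem to the (smooth) stresslet in each component region, together with a standard deformation-of-contour calculation at $\bx \in \Gamma$ to extract the principal value. Subtracting the interior and exterior limits then yields the $\pm \tfrac{1}{2} \mu_i(\bx_0)$ jump.

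The technical obstacle is to show that the remainder integral is continuous at $\bx_0$, since this underlies the existence of both one-sided limits. The integrand behaves like $|\bx - \by|^{-2} \cdot |\mu(\by) - \mu(\bx_0)|$ near $\bx_0$, so assuming $\bmu$ is H\"older continuous on $\Gamma$ with exponent $\alpha > 0$, the singularity is controlled by $|\bx - \by|^{\alpha - 2}$, which is integrable with respect to arc length in $\R^{2}$. A standard $\varepsilon$-$\delta$ argument, splitting $\Gamma$ into a small arc around $\bx_0$ (handled by the integrability estimate) and its complement (where the integrand is continuous in $\bx$), then gives the continuity of the remainder across $\Gamma$, completing the proof.
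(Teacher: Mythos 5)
Your overall strategy is the classical one, and in fact the paper gives no proof of this lemma at all -- it simply defers to the Stokes-potential literature (``see \cite{pozrikidis1992boundary} for details''), where the argument is precisely your decomposition: evaluate the constant-density integral via the divergence-free property of the stresslet and the divergence theorem (giving $-\delta_{ij}$, $-\tfrac12\delta_{ij}$, $0$ for interior, boundary, exterior points, which with the paper's normalization $T_{ijk}(\bx,\by)=-\tfrac{1}{\pi}(x_i-y_i)(x_j-y_j)(x_k-y_k)/|\bx-\by|^4$ and outward normal does reproduce the stated $\pm\tfrac12\mu_i(\bx_0)$ jump), plus a subtracted remainder shown to be continuous across $\Gamma$ for H\"older densities. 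The first claim (Stokes equations off $\Gamma$ by differentiating under the integral) is also fine.

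The gap is in the continuity step, where the singularity bookkeeping is wrong in a way that makes the argument fail as written. In two dimensions the contracted kernel $T_{ijk}(\by,\bx)n_k(\by)$ has a numerator that is cubic in $|\by-\bx|$ over $|\by-\bx|^4$, so it is $O(|\bx-\by|^{-1})$, not $O(|\bx-\by|^{-2})$; and, more importantly, your claim that $|\bx-\by|^{\alpha-2}$ is ``integrable with respect to arc length'' is false: along a curve $\int_0^{\delta} s^{\alpha-2}\,ds$ diverges for every $\alpha\in(0,1]$ (that exponent is integrable only against two-dimensional area, which is irrelevant for a boundary integral). You appear to have transplanted the three-dimensional counting, where the stresslet is $O(r^{-2})$ but the integration is over a surface. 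With your stated estimate the near-arc contribution is not even finite, so the $\varepsilon$--$\delta$ argument collapses. The correct statement is that the remainder integrand is $O(|\by-\bx_0|^{\alpha-1})$, integrable in arc length; indeed on a $C^2$ (Lyapunov) boundary the on-curve kernel is bounded because $(\by-\bx_0)\cdot\bn(\by)=O(|\by-\bx_0|^2)$ -- this mild behavior is exactly the advantage the paper claims for this representation over the Farkas kernels, whose genuine $r^{-2}$ singularity it criticizes. Finally, since the lemma asserts an unrestricted limit $\bx\to\bx_0$ from either side, the near-arc estimate must hold uniformly for $\bx$ off the curve, which requires either the cancellation coming from the factor $(\by-\bx)\cdot\bn(\by)$ (Lyapunov-type estimates) or a restriction to non-tangential approach; pointwise integrability of the remainder at $\bx=\bx_0$ alone does not yield the two one-sided limits, so this uniformity should be stated explicitly.
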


The double layer potential above has equivalent Goursat functions,
$\phi_D$ and $\psi_D$, which can be expressed in
terms of complex layer potentials:
\begin{align}
\phi_D(z) & =-\frac{1}{4\pi i}\int_{\Gamma}\frac{\rho
\left(\xi\right)}{\xi-z}d\xi \label{eq:DLGoursatPhi} \, ,\\
\chi_D(z) & =\frac{1}{4\pi i}\int_{\Gamma}\left(\overline{\rho
\left(\xi\right)}d\xi+\rho\left(\xi\right)\overline{d\xi}\right)\log
\left(\xi-z\right)
+\frac{1}{4\pi i}\int_{\Gamma}\frac{\overline{\xi}\rho
\left(\xi\right) \, d\xi}{\xi-z} \, , \label{eq:DLGoursatChi} \\
\psi_D(z) & =-\frac{1}{4\pi i}\int_{\Gamma}\frac{\overline{\rho
\left(\xi\right)}d\xi+\rho\left(\xi\right)\overline{d\xi}}{\xi-z}
+\frac{1}{4\pi i}\int_{\Gamma}\frac{\overline{\xi}\rho\left(\xi\right) \, d\xi}
{\left(\xi-z\right)^{2}} \, , \label{eq:DLGoursatPsi}
\end{align}
where $z, \xi$, and  $\rho$ are as above. The stream function 
$w_{D}$ corresponding to this Goursat function pair is
then
\begin{equation}
\begin{split}
 w_D(z) &=\mbox{Re}\left[\frac{1}{4\pi i}\int_{\Gamma}
\frac{\overline{\xi-z}}{\xi-z}\rho\left(\xi\right)d\xi
+\frac{1}{4\pi i}\int_{\Gamma}\left(\overline{\rho\left(\xi\right)}d\xi
+\rho\left(\xi\right)\overline{d\xi}\right)\log\left(\xi-z\right)\right] 
\\
&=: D^w_\Gamma \rho\label{eq:wDL} \, .
\end{split}
\end{equation}
As before, the velocity field associated with the stream function
$w_D$ is given by
\begin{equation}
\nabla^{\perp} w_D\left(z\right) = 
\nabla^{\perp} D^w_\Gamma \rho \left(z\right)=  
\mathbf{D}_{\Gamma} \boldsymbol{\mu} \left(\bx\right) \, .
\label{eq:DLStreamFuncEquiv}
\end{equation}

\subsubsection{The completed double layer representation for Stokes
flow}
Using the layer potentials described above, we can represent
the solution of $\cref{eq:biharmV1,eq:biharmV2,eq:biharmV3}$, or equivalently the system
\cref{eq:StokesFlowEq}, \cref{eq:MassConservation}, and
\cref{eq:bcStokesFlow}, in terms of a density $\boldsymbol\mu$
given on the boundary of the domain. The completed double
layer representation \cite{power1993completed} for the velocity is
\begin{equation}
 \bu(\x) = \mathbf{S}_{\Gamma} {\boldsymbol \mu} (\x) 
+ \mathbf{D}_{\Gamma} {\boldsymbol \mu} (\x) +  
\mathbf{W}_\Gamma {\boldsymbol \mu}  \label{eq:IntRepStokes} \; ,
\end{equation}
where $\mathbf{W}_\Gamma \boldsymbol{\mu} = 
\int_\Gamma \boldsymbol{\mu} \, dS$,
and the representation of an equivalent pair of Goursat
functions, giving a stream function $w$,
can be inferred from the formulas of the previous subsection.
When the no-slip boundary conditions are enforced for this
representation, the result is an invertible SKIE for the 
density $\boldsymbol\mu$. The reader can refer to \cite{power1993completed}
for a detailed discussion of the Fredholm alternative for
this representation. We summarize it as

\begin{lem} \label{lem:stokesrep}
Let $\bu$ be defined as in \cref{eq:IntRepStokes}
and $\x_0 \in \Gamma$. Then 

\begin{equation}
\begin{split}
\lim_{\substack{\bx\to\bx_{0}\\
\bx\in D^{\pm}}} \bu (\x) &=
\pm\frac{1}{2}\boldsymbol{\mu} (\x_0) 
+ \mathbf{S}_\Gamma \boldsymbol{\mu} (\x_0)
+ \mathbf{D}^{PV}_\Gamma \boldsymbol{\mu} (\x_0)
+ \mathbf{W}_\Gamma \boldsymbol{\mu} \\
& =: \pm\frac{1}{2}\boldsymbol{\mu}(\x_0) 
+ \mathbf{K}_\Gamma \boldsymbol{\mu} (\x_0) \, .
\end{split}
\end{equation}
For a sufficiently smooth curve $\Gamma$, the
operator $\mathbf{K}_\Gamma$ is a compact operator on
$\mathcal{X} \times \mathcal{X}$, where 
$\mathcal{X}$ is $L^2(\Gamma)$ or $C^{0,\alpha}(\Gamma)$ for 
$\alpha \in (0,1)$. Further, the integral equation

\begin{equation}
  \left (- \dfrac{1}{2} \mathbf{I}_{\mathcal{X}\times\mathcal{X}}
+ \mathbf{K}_\Gamma \right ) \boldsymbol{\mu} = \bh \;
\end{equation}
is invertible, even for multiply connected 
domains.
\end{lem}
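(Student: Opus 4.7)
The plan has three stages, matching the three assertions of the lemma.

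\textbf{Jump relation.} This part is a direct assembly. By \cref{lem:slppropspart1} the single layer $\mathbf{S}_\Gamma \bmu$ is continuous across $\Gamma$, and $\mathbf{W}_\Gamma \bmu$ is a fixed constant vector, hence trivially continuous. The only contribution to the boundary jump in the representation \cref{eq:IntRepStokes} therefore comes from the double layer $\mathbf{D}_\Gamma \bmu$, whose jump relation is supplied by \cref{lem:dlppropspart1}. Setting $\mathbf{K}_\Gamma := \mathbf{S}_\Gamma + \mathbf{D}^{PV}_\Gamma + \mathbf{W}_\Gamma$ reproduces the claimed formula with jump $\pm\tfrac{1}{2}\bmu(\x_0)$.

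\textbf{Compactness.} I would handle each summand of $\mathbf{K}_\Gamma$ separately. The Stokeslet $G_{ij}$ is weakly singular of logarithmic type, so $\mathbf{S}_\Gamma$ is compact on both $L^2(\Gamma)\times L^2(\Gamma)$ and $C^{0,\alpha}(\Gamma)\times C^{0,\alpha}(\Gamma)$ by the classical Hilbert--Schmidt / Arzel\`a--Ascoli arguments of potential theory. For $\mathbf{D}^{PV}_\Gamma$, the stresslet kernel contains the factor $(\br\cdot\bn(\y))/r^2$, but this factor is \emph{bounded} on a sufficiently smooth curve because $\br\cdot\bn(\y) = O(r^2)$ as $\y\to\x$ along $\Gamma$; the remaining factors are uniformly bounded, so the operator has a continuous kernel and is compact on the same spaces. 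Finally $\mathbf{W}_\Gamma$ has rank two and is trivially compact. The sum $\mathbf{K}_\Gamma$ is therefore compact on $\mathcal{X}\times\mathcal{X}$.

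\textbf{Invertibility.} With compactness established, $-\tfrac12 \mathbf{I}_{\mathcal{X}\times\mathcal{X}} + \mathbf{K}_\Gamma$ is Fredholm of index zero and the Fredholm alternative reduces the problem to showing the null space is trivial. Assume $(-\tfrac12 \mathbf{I} + \mathbf{K}_\Gamma)\bmu = 0$ and let $\bu$ be the Stokes field defined on $\R^2\setminus\Gamma$ by \cref{eq:IntRepStokes}. The jump relation just proved gives $\bu^-|_\Gamma = 0$, so uniqueness for the interior Stokes Dirichlet problem yields $\bu\equiv 0$ in $D^-$. Taking the jump across $\Gamma$ then gives $\bu^+|_\Gamma = \bmu$, and the goal is to conclude $\bmu\equiv 0$ by applying uniqueness for the exterior Stokes Dirichlet problem.

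The hard part, and the source of the cited completion of Power, is precisely this exterior step for multiply connected domains. The plain operator $-\tfrac12\mathbf{I}+\mathbf{D}^{PV}_\Gamma$ is notoriously \emph{not} invertible in the plane: its null space contains functions tied to net flux on each inner component and to the logarithmic growth of the Stokeslet at infinity (the two--dimensional Stokes paradox). The role of $\mathbf{W}_\Gamma$, together with $\mathbf{S}_\Gamma$, in the completed representation is exactly to remove these null directions. Following \cite{power1993completed}, I would test the equation $(-\tfrac12\mathbf{I}+\mathbf{K}_\Gamma)\bmu=0$ against suitable null functions to deduce $\int_\Gamma \bmu\,dS = 0$, which makes $\mathbf{W}_\Gamma\bmu$ vanish and forces the Stokeslet contribution to decay appropriately at infinity; uniqueness for the exterior Stokes problem with decay at infinity then yields $\bu\equiv 0$ in $D^+$, and the jump relation finally gives $\bmu = \bu^+|_\Gamma - \bu^-|_\Gamma = 0$. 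Invertibility on the two function spaces then follows from the Fredholm alternative.
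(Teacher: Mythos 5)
Your first two stages are fine: the jump formula is exactly the assembly of \cref{lem:slppropspart1,lem:dlppropspart1} with the constant operator $\mathbf{W}_\Gamma$, and the compactness of $\mathbf{S}_\Gamma$, $\mathbf{D}^{PV}_\Gamma$ (bounded/continuous kernel on a smooth curve since $\br\cdot\bn(\by)=O(r^2)$) and the rank-two operator $\mathbf{W}_\Gamma$ is standard. Note, though, that the paper itself does not prove this lemma at all --- it is stated as a summary of known results and the Fredholm/invertibility analysis is deferred to \cite{power1993completed} --- so the only part of your proposal that can be judged as new argument is the injectivity step, and that step has a genuine gap.

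The gap is in the exterior half of the uniqueness argument. From $(-\tfrac12\mathbf{I}+\mathbf{K}_\Gamma)\bmu=0$ you correctly get $\bu\equiv 0$ in $D^-$ and hence $\bu^+|_\Gamma=\bmu$; but you then invoke ``uniqueness for the exterior Stokes Dirichlet problem with decay at infinity'' to conclude $\bu\equiv 0$ in $D^+$. This is circular: the exterior Dirichlet data is $\bmu$ itself, which is precisely what you are trying to show vanishes, and decay at infinity alone is not a boundary condition. Moreover $D^+=\R^2\setminus(D\cup\Gamma)$ contains the $N$ bounded holes, where no condition at infinity is available at all --- and for multiply connected domains these holes are exactly where the troublesome null directions of the uncompleted double layer live (rigid-body motions on each $\Gamma_k$, roughly $3N$ degrees of freedom), so the single scalar relation $\int_\Gamma\bmu\,dS=0$ that you propose to extract cannot eliminate them. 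The argument in \cite{power1993completed} instead proceeds through tractions and energy identities: with $\bu\equiv 0$ in $D^-$ the interior traction is a constant pressure times $\bn$, the traction of the double layer is continuous across $\Gamma$ while that of the single layer jumps by $\bmu$, and energy (Lorentz/Green) identities applied in each component of $D^+$ force the rate-of-strain to vanish there, so $\bu$ is a rigid motion in each component; it is then the specific structure of the completion terms $\mathbf{S}_\Gamma$ and $\mathbf{W}_\Gamma$ that rules out these rigid motions and yields $\bmu\equiv\bzero$. As written, your step ``exterior uniqueness $\Rightarrow\bu\equiv 0$ in $D^+$'' would fail; either carry out the traction/energy argument or, as the paper does, cite \cite{power1993completed} for this part.
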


For the above integral equation, the singularities 
of the integral kernels which define $\mathbf{K}_\Gamma$ 
are at worst order $r^{-1}$, even for a boundary 
with a corner.

\section{Integral equation derivation} \label{sec:anapp}

We would like to adapt the completed double layer representation
for solutions of Stokes flow \cref{eq:biharmV1,eq:biharmV2,eq:biharmV3} to solve 
the clamped plate problem \cref{eq:biharmD1,eq:biharmD2,eq:biharmD3}.
Let $f$ and $g$ be the boundary data as in \cref{eq:biharmD1,eq:biharmD2,eq:biharmD3}. 
By computing tangential derivatives of $f$ on each
boundary component, we get the following related Stokes problem:
\begin{align}
 \Delta^2 \tilde{w} = 0 &\quad \bx \in D \, ,\nonumber\\
 \frac{\partial \tilde{w}}{\partial \tau} = 
 \frac{\partial f}{\partial \tau} 
&\quad \bx \in \Gamma \label{eq:biharm5} \, ,\\ 
 \frac{\partial \tilde{w}}{\partial n} = g &\quad \bx 
\in \Gamma 
\, . 
\nonumber
\end{align}
There are two main issues to be addressed in using the completed
double layer representation in this context. First,
the representation is designed for 
Stokes flow, in which the quantities of interest are derivatives
of the potential $\tilde{w}$ and not $\tilde{w}$ itself; the
representation for $\tilde{w}$ may not be single-valued. 
We will establish that, in the context of \cref{eq:biharm5},
the stream function is necessarily single-valued. We also discuss
some numerical issues related to evaluating the stream function.
The second issue to address
is that the solution $\tilde{w}$ only satisfies the original boundary 
condition for the value of $\tilde{w}$ up to a constant on 
each boundary component.
In fact, for multiply connected domains, the completed double layer 
representation is incomplete for the Dirichlet
problem \cref{eq:biharmD1,eq:biharmD2,eq:biharmD3}. We present a remedy for this 
issue and provide some physical intuition.

\subsection{Single-valued stream functions}
To solve the Dirichlet problem \cref{eq:biharmD1,eq:biharmD2,eq:biharmD3},
it is necessary to compute
a single-valued biharmonic potential.
In the case of a multiply connected domain,
there is no guarantee that a single-valued stream 
function exists for a given velocity field.

Consider the following example.
Let $(r,\theta)$ denote standard polar coordinates. It is
easy to verify that the velocity field 
$\bu=\frac{1}{r}\hat{r}$ 
solves the equations of Stokes flow in an annulus centered at the 
origin. A stream function for this flow is $w=\theta$, which 
is not single-valued; indeed, there are no single-valued
stream functions which generate this flow. 

Let $D$ be a multiply connected domain with boundary 
$\Gamma = \cup_{i=0}^N \Gamma_i$, as in the previous section.
We note that the gradient of a stream function is determined 
by the velocity field, i.e. 

\begin{equation}
\nabla w = -\bu^\perp := \begin{pmatrix} 
  - u_2 \\ u_1 
\end{pmatrix} \; .
\end{equation}
Therefore, a velocity field has single-valued stream
functions if and only if $\bu^\perp$ is conservative.
Using standard results from multivariable calculus, 
we can  characterize such flows.

\begin{proposition} 
Suppose that $\bu$ is a divergence-free velocity 
field which is $C^1$ on $D$ and continuous on 
$D\cup \Gamma$. The field $\bu^\perp$ 
is conservative if and only if 

\begin{equation}
 \int_{\Gamma_i} \bu\cdot \bn \, dS 
= 0 \quad i=0,1,\ldots N \, .
\label{eq:NecSuffStreamFuncExistence}
\end{equation}

\end{proposition}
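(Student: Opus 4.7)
The plan is to reduce the claim to the standard topological criterion for a closed vector field on a multiply connected planar domain to be exact. First, because $\bu$ is divergence-free, the rotated field $\bu^\perp = (-u_2, u_1)^\intercal$ satisfies $\partial_{x_1}(u_1) - \partial_{x_2}(-u_2) = \nabla\cdot\bu = 0$ in $D$, so $\bu^\perp$ is curl-free, i.e., a closed $C^1$ 1-form on $D$. Second, at each point of $\Gamma_i$ the positively-oriented tangent $\boldsymbol{\tau}$ and the outward normal are related by $\bn = (\tau_2,-\tau_1)^\intercal$ (a direct check, valid on both the outer and the inner boundary components once orientations are reconciled), which yields the pointwise identity $\bu^\perp\cdot\boldsymbol{\tau} = u_1\tau_2 - u_2\tau_1 = \bu\cdot\bn$. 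Consequently the circulation of $\bu^\perp$ along $\Gamma_i$ equals $\int_{\Gamma_i}\bu\cdot\bn\,dS$, and \cref{eq:NecSuffStreamFuncExistence} is exactly the vanishing of these circulations.

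The forward direction is then immediate: if $\bu^\perp = \nabla w$ for some single-valued $w$, then the line integral of $\bu^\perp$ along any closed curve in $D\cup\Gamma$ vanishes, and in particular its integrals around each $\Gamma_i$ are zero.

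For the converse, I would invoke the classical fact that the first homology group of a planar domain with $N$ holes is free abelian of rank $N$, generated by simple loops encircling each hole once; the inner boundaries $\Gamma_1,\ldots,\Gamma_N$ furnish such a system of generators. The divergence theorem applied to $\bu$ on $D$ gives $\sum_{i=0}^N\int_{\Gamma_i}\bu\cdot\bn\,dS = 0$, so the hypothesis on $\Gamma_0$ is automatic once the other $N$ hold. The vanishing of the circulations of the closed form $\bu^\perp$ around a set of generators of $H_1(D)$ then implies, by the standard argument, that fixing a basepoint $\bx_0\in D$ and setting $w(\bx) = \int_{\bx_0}^{\bx}\bu^\perp\cdot d\bs$ along any path in $D$ produces a well-defined, single-valued function with $\nabla w = \bu^\perp$.

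The main technical nuisance is that $\bu$ is only assumed $C^1$ inside $D$ and merely continuous up to $\Gamma$, whereas the hypothesis is phrased as an integral over the boundary. This is handled by approximating each $\Gamma_i$ from inside $D$ by a smooth family of curves, noting that circulations of $\bu^\perp$ around homotopic curves agree by Green's theorem applied in the thin annular region between them (where $\bu^\perp$ is $C^1$ and curl-free), and passing to the limit using continuity of $\bu$ up to the boundary.
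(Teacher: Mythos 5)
Your argument is correct, and it supplies exactly the ``standard results from multivariable calculus'' that the paper invokes for this proposition without writing out a proof: the rotated field $\bu^\perp$ is curl-free because $\bu$ is divergence-free, its circulation about each $\Gamma_i$ equals $\int_{\Gamma_i}\bu\cdot\bn\,dS$ via the identity $\bu^\perp\cdot\boldsymbol{\tau}=\bu\cdot\bn$, and vanishing of these periods over a set of generators of the homology of the $N$-holed domain is equivalent to exactness, i.e.\ to the existence of a single-valued potential. Your closing point---approximating each $\Gamma_i$ by nearby interior curves and using Green's theorem in the thin annulus plus continuity of $\bu$ up to $\Gamma$, since $\bu$ is only $C^1$ in the open domain---is a regularity detail the paper glosses over, and you handle it correctly.
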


The equalities \cref{eq:NecSuffStreamFuncExistence}
constitute $N$ linearly independent constraints on the 
boundary data because the divergence-free condition 
\cref{eq:MassConservation} implies that
$\int_{\Gamma} \bu \cdot \bn \, dS = 0$. 
It turns out that 
these conditions are satisfied when the Dirichlet 
problem is recast as a Stokes flow \cref{eq:biharm5}, as 
it is easily verified that
\begin{equation}
 \int_{\Gamma_i} \bu\cdot \bn \, dS 
= \int_{\Gamma_i} \frac{\partial f}{\partial \tau} \, dS =0 \, .
\end{equation}
Thus, any stream function $\tilde{w}$ obtained for the Stokes
flow \cref{eq:biharm5} is necessarily single-valued.

\subsection{Evaluating the stream function} \label{subsec:stream}

Given compatible boundary data for the velocity field $\bu$, 
the completed double layer representation for Stokes flow 
\cref{eq:IntRepStokes} guarantees the existence of
a solution density $\boldsymbol\mu$ 
and a corresponding stream function $\tilde{w}$. The Goursat 
function formula 
for $\tilde{w}$, see \cref{sec:stokeslayer},
is necessarily single-valued, as explained above,
but it is not immediately obvious from the formula 
that this should be true.

The difficulty in the representation of $\tilde{w}$ 
comes from the part
of the stream function corresponding to the double layer potential
 \cref{eq:wDL}. The 
second term in the expression for the double layer potential is
\begin{equation}
 v_1(z) = \mbox{Re}\left[\frac{1}{4\pi i}\int_{\Gamma}\left(
\overline{\rho\left(\xi\right)}d\xi+\rho\left(\xi\right)
\overline{d\xi}\right)\log\left(\xi-z\right)\right] \, .
\end{equation}
To compute this term, in a na\"{i}ve numerical implementation,
the question of which is the appropriate branch of the 
logarithm to use would arise at many steps.
To avoid this complication, it is possible instead to compute
$v_1$, up to a constant, as the harmonic conjugate of the
function 
\begin{equation}
 v_2 = \frac{1}{4\pi}\int_{\Gamma}\left(\overline{\rho\left(\xi\right)}
d\xi+\rho\left(\xi\right)\overline{d\xi}\right)\log
\left(\left|\xi-z\right|\right) \, . \label{eq:harmconjg}
\end{equation}

We will use this approach to evalute $v_1$ numerically.
As a result of the Cauchy-Riemann equations, the 
harmonic conjugate of $v_2$, satisfies the following
Neumann problem for the Laplace equation:
\begin{align}
 \Delta v_1 &= 0 &\quad x\in D \, ,\\
 \frac{\partial v_1}{\partial n} &= -\frac{\partial v_2}{\partial \tau} 
&\quad x\in\Gamma \, .
\end{align}
It is possible then to use standard integral equation
methods to compute $v_1$. 

Let $v_1 = S^L_\Gamma \sigma$, where $S^L_\Gamma \sigma$ 
is the single layer potential for Laplace's equation, given by
\begin{equation}
S^L_\Gamma\sigma (\x) = -\frac{1}{2\pi} \int_{\Gamma} \log 
\left|\x - \y \right|\sigma(\y)\, dS \left(\y \right) \, ,
\end{equation}
where $\sigma\in \mathcal{X}= C^{0,\alpha}\left(\Gamma\right)$, for 
some $\alpha \in (0,1)$, is an unknown density 
(see \cite{kress1999linear, guenther1988partial}).
Imposing the Neumann boundary conditions results in the 
following boundary integral equation for $\sigma$:
\begin{align}
-\frac{\partial v_2}{\partial \tau} (\x) &= \frac{1}{2} \sigma \left(\bx\right) - \frac{1}{2\pi}\oint_{\Gamma} 
\frac{\partial}{\partial n_x}  
\log \left| \x - \y \right |\sigma(\y)\, dS \left(\y \right) 
\, , \\
-\frac{\partial v_2}{\partial \tau} &= \left( \frac{1}{2}I_{\mathcal{X}} + K^L_\Gamma  \right) \sigma 
\, ,
\label{eq:BlockSystemRow2P1tmp}
\end{align}
where the operator $K^L_\Gamma$ is compact, so that the integral equation
is second kind.
For a derivation of this result, see \cite{kress1999linear}.

It is well known that the operator $\frac{1}{2}I_{\mathcal{X}} + K^L_\Gamma$ 
has a one dimensional null space. Thus,
we choose to solve the above integral equation subject to 
the constraint $\int_{\Gamma} \sigma \, dS = 0$. 
Furthermore, it is known that solving the Neumann problem subject 
to the above constraint is equivalent
to solving
\begin{align}
\left( \frac{1}{2}I_{\mathcal{X}} + K^L_\Gamma  + W_\Gamma \right) \sigma 
= -\frac{\partial v_2}{\partial \tau}
\label{eq:BlockSystemRow2P1} 
\end{align}
where $W_\Gamma\sigma = \int_{\Gamma} \sigma \, dS$. 

\subsection{Making the representation complete}
As mentioned above, the solution $\tilde{w}$ of 
the auxiliary Stokes problem \cref{eq:biharm5}
only satisfies the boundary conditions of the 
original Dirichlet problem \cref{eq:biharmD1,eq:biharmD2,eq:biharmD3}
up to a constant on each boundary component. For a simply
connected domain, this constant can be recovered from the
fact that adding an arbitrary constant to a stream function
does not change the velocity field. 
Thus, in simply connected domains, there is an equivalence in the 
solutions of \cref{eq:biharm5} and \cref{eq:biharmD1,eq:biharmD2,eq:biharmD3}.

To analyze the case of a multiply connected domain,
we first consider
radially symmetric solutions on an annulus centered at the
origin. Let $w\left(r\right)$ be a 
radially symmetric biharmonic
potential. Then $w(r)$ solves the ordinary differential
equation (ODE)
\begin{equation}
 \frac{d}{dr}r\frac{d}{dr}\frac{1}{r}\frac{d}{dr}r
\frac{dw}{dr} = 0 \, .
\end{equation}
Four linearly independent solutions of this ODE are $1$, $r^2$, 
$\log \left(r\right)$, and $r^2 \log\left(r\right)$. 
For each solution, we can compute the associated velocity field 
$\bu = \nabla^{\perp} w$. By construction, $\bu$ satisfies 
the continuity condition \cref{eq:MassConservation}. 
For the momentum equation \cref{eq:StokesFlowEq} to be 
satisfied, we need that $\Delta\bu$ is a conservative
vector field, which 
is equivalent to the condition that 
$\int_{\gamma} \Delta\bu \cdot d\bl = 0$ 
for any closed 
loop $\gamma$ in the annulus. For the first three linearly independent
solutions, $\Delta \bu=0$ so that $\Delta \bu$
is trivially a conservative vector field. 
The fourth solution, on the other
hand, has $\Delta \bu=\frac{4}{r} \hat{\theta}$.
By considering a curve $\gamma$ encircling the origin,
we see that $\Delta \bu$ is not a conservative vector
field and that any pressure for the velocity field 
associated with $r^2 \log\left(r\right)$ is not single-valued.

The function $\frac{1}{8\pi} r^2 \log\left(r\right)$ 
is the Green's function for the biharmonic equation and is the 
equivalent of a \textit{charge} for such problems. The above analysis
can be extended to show that any solution of the biharmonic equation 
with net charge 
cannot be represented as a Stokes velocity field. In simply connected 
domains, since $\Delta^2 w=0$, there can be no net biharmonic charge
in the domain. For multiply connected domains with genus $N$, 
the set of stream functions for Stokes velocity fields
misses an $N$ dimensional space of solutions, corresponding 
to biharmonic charges located in the holes of the domain.
Following this reasoning, we obtain a complete representation
for biharmonic potentials on multiply connected domains
by adding $N$ charges, one per each hole of the domain,
to the representation for $w$. The details of this approach, and
the proof that it is sufficient, is in the next section.

\subsection{The integral representation}
Following the discussion in the previous two sections,
it is now possible to present an integral representation
for the Dirichlet problem of the biharmonic equation based
on the completed double layer representation
for the Stokes problem. 
We first fix some notation. Let $D$ be a multiply connected domain,
with boundary $\Gamma = \cup_{k=0}^N \Gamma_k$, as in the previous
sections. 
For each boundary component
$\Gamma_k$, let $D_k$ be its interior and $z_k$ be a point in
$D_k$. Then, let the solution $w$ be represented in terms 
of layer potentials and biharmonic charges as
\begin{equation}
w\left(z\right) = S^w_\Gamma \rho \left(z\right)
 + D^w_\Gamma \rho \left(z\right) 
+ \text{Re} 
\left[\overline{z} \int_{\Gamma} \rho \left(\xi \right) \, dS \right]
+ c_0 + \sum_{k=1}^{N} c_k r_{k}^2 
\log \left(r_k\right)  
\, ,
\label{eq:IntRepW}
\end{equation}
where $\rho$ is an unknown density, the $c_k$ are unknown
constants, the distance from $z$ to $z_k$ is $r_k = |z-z_k|$,
and the operators 
$S^w_\Gamma$ and $D^w_\Gamma$
map complex densities to the stream functions corresponding 
to single and double layer potentials, as defined in \cref{sec:prelim}.

\begin{remark}
  As discussed in \cref{subsec:stream}, we will only
evaluate the operator $D^w_\Gamma$ up to a constant in 
our numerical implementation. However, this does not affect
the analysis of this section because of the freedom in 
choosing $c_0$.
\end{remark}

As before, we can identify a real, vector-valued density 
$\boldsymbol{\mu} = (\mu_1,\mu_2)^\intercal$
with $\rho$ by setting $\mu_2(\bx) - 
i \mu_1(\bx) = \rho(z)$. Let $\bu = \nabla^\perp w$
be the velocity field corresponding to the stream function $w$.
Then, in terms of $\boldsymbol{\mu}$, we have

\begin{equation}
\bu \left(\bx\right)= 
\mathbf{S}_{\Gamma} \boldsymbol{\mu} \left(\bx\right) + 
\mathbf{D}_{\Gamma} \boldsymbol{\mu} \left(\bx\right) +
\mathbf{W}_\Gamma \boldsymbol{\mu} + 
\nabla^{\perp} \sum_{k=1}^{N} c_k r_{k}^2 \log (r_k)
\, , 
\label{eq:IntRepU}
\end{equation}
where $\mathbf{S}_{\Gamma} \boldsymbol{\mu}$ and 
$\mathbf{D}_{\Gamma} \boldsymbol{\mu}$ are the single and double
layer potentials for the density $\boldsymbol{\mu}$, as defined
in section 2.

Let $\alpha \in \left(0,1 \right)$ 
and $\mathcal{X} = C^{0,\alpha} \left(\Gamma \right)$. 
Assume that the boundary data
for the Dirichlet problem \cref{eq:biharmD1,eq:biharmD2,eq:biharmD3} satisfies 
$f \in C^{1,\alpha} \left(\Gamma \right)$ and $g \in \mathcal{X}$,
a slightly stronger assumption on the regularity of $f$
than given in the original problem statement.
Denote the integrals of $f$ around each boundary
component by $b_k = \int_{\Gamma_k} f \, dS$. 
To solve equation 
\cref{eq:biharmD1,eq:biharmD2,eq:biharmD3}, we impose the boundary conditions on
the gradient of $w$ as in \cref{eq:biharm5} on the above 
representation for $w$,
or, equivalently, the no-slip boundary conditions
\cref{eq:bcStokesFlow} 
on the above representation for $\bu$ with
\begin{equation}
\bh = \left(-\left(\frac{\partial f}{\partial \tau}\tau_2 
+ g n_2\right), 
\frac{\partial f}{\partial \tau}\tau_1 + g n_1 \right)^\intercal \, .
\end{equation}
Under the assumptions on $f$ and $g$, the boundary data $\bh 
\in \mathcal{X} \times \mathcal{X}$.

Let $\bB\bc(\x)$ denote the part of the 
velocity field due to the charges, i.e.

\begin{equation}
  \bB\bc(\x) = 
  \nabla^{\perp} \sum_{k=1}^{N} c_k r_{k}^2 \log ( r_k) \; .
\end{equation}
Then, due to \cref{lem:slppropspart1,lem:dlppropspart1}, 
enforcing the boundary condition $\bu(\x) = \bh(\x)$ 
for each $\x \in \Gamma$ results in the following boundary 
integral equation
\begin{align}
\bh\left(\bx\right) &= -\frac{1}{2} \boldsymbol{\mu}
\left(\bx\right) + 
\mathbf{S}_{\Gamma} \boldsymbol{\mu} \left(\bx\right) +
\mathbf{D}_{\Gamma}^{PV} \boldsymbol{\mu} \left(\bx\right) +
\mathbf{W}_\Gamma \boldsymbol{\mu}(\x)
\nonumber \\
&\quad + \nabla^{\perp} \sum_{k=1}^{N} c_k r_{k}^2 
\log (r_k)
 \\
\bh(\x) &= \left(-\frac{1}{2} \mathbf{I}_{\mathcal{X}\times\mathcal{X}} 
+ \mathbf{K}_\Gamma\right) \boldsymbol{\mu}(\x) +
\bB\bc(\x)
\label{eq:BlockSystemRow1new}
\, .
\end{align}

To ensure that the values of $w$ are correct on the boundary,
further constraints are needed.
We impose $N+1$ additional conditions on the value of $w$
\begin{equation}
 \int_{\Gamma_k} w \, dS = b_k \quad k=0,1,2,\ldots , N \label{eq:IntConstNew}
\, ,
\end{equation}
where the constants $b_k$ are as defined above. The
integral of $w$ about each component can be written
in terms of the unknowns as 

\begin{align}
\int_{\Gamma_k} w\left(\bx \right) \, dS(\x) &= 
\int_{\Gamma_k} \left[
S^w_\Gamma \left[-\mu_2 + i\mu_1 \right] \left( \xi \right )
+ D^w_\Gamma \left[-\mu_2 + i\mu_1 \right] \left( \xi \right )
\right ] \, dS_{\xi}  \nonumber \\
&\quad + \int_{\Gamma_k} \left [ \alpha \int_{\Gamma} 
\boldsymbol{\mu} \left(\by \right)
\cdot \bx \, dS_{\by} \right ] \, dS(\x)
\nonumber \\
&\quad + \int_{\Gamma_k} \left[ c_0 + 
\sum_{l=1}^{N} c_l r_{l}^2 \log r_l \right] \, dS(\x) \\
& =: D_{k} \boldsymbol{\mu} + F_{k}\bc 
\label{eq:BlockSystemRow2new}
\end{align}
Combining equations \cref{eq:BlockSystemRow1new}, \cref{eq:IntConstNew}, 
and \cref{eq:BlockSystemRow2new},
we get the following linear system for the 
unknowns $\boldsymbol{\mu}$ and $\bc$
\begin{equation}
 \begin{bmatrix}
  -\frac{1}{2}\mathbf{I}_{\mathcal{X}\times\mathcal{X}}
  +\mathbf{K}_\Gamma &  \bB \\
    \mathbf{D} & \mathbf{F}
 \end{bmatrix}
\begin{bmatrix}
 \boldsymbol{\mu} \\
 \bc
\end{bmatrix} = 
\begin{bmatrix}
 \bh\\
\bb 
\end{bmatrix} 
\label{eq:IntEqBiharm1new} \; ,
\end{equation}
where $\mathbf{D} = \left(D_0, \ldots D_N\right)^\intercal$, 
$\mathbf{F} = \left(F_0, \ldots F_N\right)^\intercal$, 
and $\bB = \left(b_0, \ldots b_N\right)^\intercal$. 

\begin{proposition}
The block system \cref{eq:IntEqBiharm1new}
is an invertible Fredholm operator.
\end{proposition}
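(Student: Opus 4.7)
The plan is to verify that the block operator is Fredholm of index zero and that its null space is trivial; invertibility then follows from the Fredholm alternative.

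\textbf{Fredholm of index zero.} Decompose the operator $\mathcal{A}$ acting on $(\mathcal{X}\times\mathcal{X})\oplus\mathbb{R}^{N+1}$ as $\mathcal{A} = \mathcal{A}_0 + \mathcal{K}$, where
\begin{equation*}
\mathcal{A}_0 = \begin{bmatrix} -\tfrac12 \mathbf{I}_{\mathcal{X}\times\mathcal{X}} & 0 \\ 0 & \mathbf{I}_{N+1} \end{bmatrix}, \qquad
\mathcal{K} = \begin{bmatrix} \mathbf{K}_\Gamma & \bB \\ \mathbf{D} & \mathbf{F} - \mathbf{I}_{N+1} \end{bmatrix}.
\end{equation*}
Here $\mathcal{A}_0$ is boundedly invertible, $\mathbf{K}_\Gamma$ is compact by \cref{lem:stokesrep}, the off-diagonal blocks $\bB$ and $\mathbf{D}$ are of finite rank (the image of $\bB$ lies in the $N$-dimensional span of the smooth fields $\nabla^\perp(r_k^2\log r_k)$, while $\mathbf{D}$ takes values in $\mathbb{R}^{N+1}$), and $\mathbf{F} - \mathbf{I}_{N+1}$ acts between finite-dimensional spaces. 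Hence $\mathcal{K}$ is compact and $\mathcal{A}$ is a compact perturbation of an invertible operator, thus Fredholm of index zero.

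\textbf{Injectivity.} Assume $\bh \equiv \bzero$ and $\bb = \bzero$, and let $w$ be defined by \cref{eq:IntRepW}. The first row of \cref{eq:IntEqBiharm1new} gives $\bu = \nabla^\perp w = \bzero$ on $\Gamma$, equivalently $\partial w/\partial\tau = 0$ and $\partial w/\partial n = 0$ on each $\Gamma_k$. Tangential vanishing makes $w$ constant on every $\Gamma_k$, and the integral constraints $\int_{\Gamma_k} w\,dS = b_k = 0$ force that constant to vanish. With $w|_\Gamma = \partial w/\partial n|_\Gamma = 0$ and $\Delta^2 w = 0$ in $D$, the classical biharmonic Dirichlet uniqueness theorem (via the energy identity $\int_D |\Delta w|^2 = 0$) yields $w\equiv 0$ in $D$.

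To extract the unknowns, split $w = w_0 + w_{\text{ch}}$, where $w_0 = S^w_\Gamma \rho + D^w_\Gamma \rho + \text{Re}[\bar z \int_\Gamma \rho\, dS]$ is the stream function of the Stokes velocity $\bu_0 = \mathbf{S}_\Gamma \boldsymbol\mu + \mathbf{D}_\Gamma \boldsymbol\mu + \mathbf{W}_\Gamma \boldsymbol\mu$ (which has a single-valued pressure $p_0$ on $D$), and $w_{\text{ch}} = c_0 + \sum_{k=1}^N c_k r_k^2 \log r_k$. From $w\equiv 0$ in $D$ together with the computation $\Delta \nabla^\perp(r_k^2\log r_k) = (4/r_k)\hat{\theta}_k$, one obtains
\begin{equation*}
\nabla p_0 \;=\; \Delta \bu_0 \;=\; -\sum_{k=1}^N c_k\,\frac{4}{r_k}\,\hat{\theta}_k \quad \text{in } D.
\end{equation*}
Integrating around a loop in $D$ homotopic to $\Gamma_j$ yields $\oint \nabla p_0 \cdot d\bl = -8\pi c_j$; single-valuedness of $p_0$ then forces $c_j = 0$ for each $j = 1,\ldots,N$. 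Once the charges vanish, $w_0 = -c_0$ in $D$, hence $\bu_0 \equiv \bzero$ in $D$, and the interior trace formula gives $(-\tfrac12 \mathbf{I}_{\mathcal{X}\times\mathcal{X}} + \mathbf{K}_\Gamma)\boldsymbol\mu = \bzero$; invertibility from \cref{lem:stokesrep} yields $\boldsymbol\mu \equiv \bzero$. Finally, with $\boldsymbol\mu \equiv \bzero$ every layer potential in $w_0$ vanishes identically, so $w_0 \equiv 0$ and the relation $w_0 = -c_0$ gives $c_0 = 0$.

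I expect the main obstacle to be the pressure-consistency step: one must rigorously justify that $\bu_0$ carries a single-valued pressure on the multiply connected $D$ and that each loop integral around a single hole picks out exactly one coefficient $c_j$. This crucially leverages the observation from the previous subsection that $r_k^2\log r_k$ is precisely the biharmonic mode whose Stokes pressure is multi-valued, and it is this nonconservative circulation that forces each $c_j$ to vanish.
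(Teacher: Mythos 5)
Your proposal is correct and follows essentially the same route as the paper: establish Fredholmness by block structure (your compact-perturbation-of-invertible phrasing just makes the index-zero point explicit), deduce $w\equiv 0$ from uniqueness of the biharmonic Dirichlet problem, kill the charge coefficients $c_k$ by integrating $\Delta\bu$ around loops encircling each hole and using single-valuedness of the pressure of the layer-potential part (the $8\pi c_j$ circulation of each charge), then invoke invertibility of $-\tfrac12\mathbf{I}+\mathbf{K}_\Gamma$ to get $\boldsymbol\mu\equiv\bzero$ and finally $c_0=0$. The "pressure-consistency" step you flag is handled in the paper at the same level of rigor, by observing that $\mathbf{S}_\Gamma\boldsymbol\mu+\mathbf{D}_\Gamma\boldsymbol\mu$ is a Stokes field with an associated (single-valued) pressure.
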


\begin{proof}
It is simple to
show that the linear system \cref{eq:IntEqBiharm1new}
is Fredholm. The block which contains
$-1/2 \mathbf{I}_{\mathcal{X}\times\mathcal{X}}
+ \mathbf{K}_\Gamma$ is Fredholm due to 
\cref{lem:stokesrep}. The off-diagonal blocks,
denoted by $\bB$ and $\mathbf{D}$, 
are trivially compact because either the domain 
or range of the operator is finite dimensional. 
Finally, $\mathbf{F}$ is 
Fredholm because it is a finite-dimensional linear
operator. Therefore, the full system is Fredholm.

Due to the Fredholm alternative,
it is only necessary to establish the injectivity of
the system \cref{eq:IntEqBiharm1new} to prove that 
it is invertible.
It is clear that if $\boldsymbol{\mu}$ and $\bc$ 
solve equation \cref{eq:IntEqBiharm1new}, then the 
resulting solution, $w$, given by \cref{eq:IntRepW}, 
solves the original Dirichlet
problem \cref{eq:biharmD1,eq:biharmD2,eq:biharmD3}. By construction, 
$w$ is biharmonic in $D$. Moreover, $w$ satisfies 
$\frac{\partial w}{\partial \tau} = \frac{\partial f}{\partial \tau}$
and $\frac{\partial w}{\partial n} = g$ on the whole boundary
$\Gamma$ and $\int_{\Gamma_{k}} w = \int_{\Gamma_k} f$ for each
boundary component $\Gamma_k$, so that the boundary conditions are
satisfied. 

In the case that $\bh \equiv \bzero$ and 
$\bb = \bzero$,
we have that $f = g \equiv 0$ for the Dirichlet problem.
By the uniqueness of solutions to \cref{eq:biharmD1,eq:biharmD2,eq:biharmD3}, this implies
that $w \equiv 0$ in $D$. It is, however, less immediate 
that $w \equiv 0$ implies that $\boldsymbol{\mu} \equiv 
\bzero$ and $\bc = \bzero$. 

For each $k = 1,\ldots,N$, 
let $\tilde{\Gamma}_{k} \subset D$ be a curve which satisfies 
$n\left(z_{j}, \tilde{\Gamma}_{k}\right) = \delta_{jk}$, 
where $n\left(z,\gamma\right)$ represents the winding number 
of the curve $\gamma$ about z. 
Because $\bu = \nabla^{\perp} w$ and $w\equiv 0$ in $D$,
we have 
\begin{equation}
\int_{\tilde{\Gamma}_k} \Delta \bu \cdot \boldsymbol{\tau} \, dS = 0 .
\end{equation}

Let $\bu^\mu = \mathbf{S}_\Gamma \boldsymbol{\mu}
+ \mathbf{D}_\Gamma \boldsymbol{\mu} = \bu - \bB\bc 
- \mathbf{W}_\Gamma \boldsymbol{\mu}$. 
We observe that $\bu^\mu$ corresponds
to a Stokes velocity field in $D$ for any $\boldsymbol{\mu}$. 
Let $p$ be its associated pressure. Then
\begin{equation}
\int_{\tilde{\Gamma}_k} \Delta \bu^\mu 
\cdot \boldsymbol{\tau}
\, dS = \int_{\tilde{\Gamma}_k} \nabla p \cdot \boldsymbol{\tau} \, dS = 0 \, .
\end{equation}
Further, a simple calculation shows that 
\begin{equation}
\int_{\tilde{\Gamma}_k} \Delta \nabla^{\perp} c_j 
r_j^2 \log (r_j) \cdot \boldsymbol{\tau} \, dS = 
8 \pi c_{j} \delta_{jk} \, ,
\end{equation}
for $j = 1, \ldots, N$.
Combining these equations, we conclude that
\begin{equation}
0 = \int_{\tilde{\Gamma}_k} \Delta \bu \cdot 
\boldsymbol{\tau} \, dS =
\int_{\tilde{\Gamma}_k} \Delta ( \bu^\mu 
+ {\mathbf W} \boldsymbol{\mu} + \bB\bc )
\cdot \boldsymbol{\tau} \, dS =  
8\pi c_k \, .
\end{equation}
Thus $c_k = 0$ for $k=1,2,\ldots N$. 

The first row of the system \cref{eq:IntEqBiharm1new} then reads
\begin{equation}
\left(-\frac{1}{2}\mathbf{I}_{\mathcal{X}\times\mathcal{X}} 
+ \mathbf{K}_\Gamma \right) 
\boldsymbol{\mu} = 0 \, . \label{eq:IntEq1}
\end{equation}
From the invertibility of 
$-\frac{1}{2}\mathbf{I}_{\mathcal{X}\times\mathcal{X}} 
+ \mathbf{K}_\Gamma$, we
conclude that $\boldsymbol{\mu} \equiv \bzero$. 
Because $\boldsymbol{\mu}\equiv \bzero$ and 
$c_k = 0$ for $k = 1, \ldots , N$,
we get that $w \equiv c_0$. It then follows that $c_0 = 0$
as well, proving the injectivity of the system. 
\end{proof}

\section{Results} \label{sec:results}

We first review the existing numerical tools used to compute
solutions of the integral equation~\cref{eq:IntEqBiharm1new}.
To discretize the integral equations, we use the
Nystr\"{o}m method. 
We divide the boundary into
panels and represent the unknown density and the
boundary data by their values at scaled Gauss-Legendre
nodes on each panel. 
Let $n_{p}$ denote the number of Gauss-Legendre panels.
We discretize each panel using 16 scaled Gauss-Legendre nodes.
Then $n_{d} = 16 n_{p}$ is the number of discretization points 
on the boundary.
Let $\bx_{j}$ denote the discretization nodes, $w_{j}$ denote the
appropriately scaled Gauss-Legendre quadrature weights for smooth functions,
and $\bmu_{j}$ denote the unknown density at $\bx_{j}$.
When forming the linear system, we use scaled unknowns, 
$\bmu_{j} \sqrt{w_{j}}$, so that the spectral properties of the 
discrete system with respect to the $l_2$ norm
are approximations of the spectral properties of the continuous
system as on operator on $L_2$ (for more on this point of view, 
see \cite{bremer2012}).
The integral kernels in this paper are either smooth or have 
a weak (logarithmic) singularity. 
For the smooth kernels in the integral representation, we use standard 
Gauss-Legendre weights appropriately scaled.
For kernels with a logarithmic singularity, we use order $20$ 
generalized Gaussian quadrature rules~\cite{bremer2010,bremer2010u}.

After applying the integral rule, we obtain
a linear system for the unknowns. This system is
typically well-conditioned, but dense. 
Let $\bA$ denote the discretized linear system of size $2n_{d}+N+1$
corresponding to the integral equation~\eqref{eq:IntEqBiharm1new}.
Let $\kappa(\bA)$ denote the condition number of the discretized matrix
$\bA$. 
For our applications, the system size was modest and 
we computed the unknowns $\boldsymbol{\mu}$ and $c_{k}$ 
using Gaussian elimination. 
For larger applications,
the system is amenable to solution by any of a
variety of iterative or fast-direct solvers,
which we will not review here.

For the visualizations in this section, we
evaluate the layer potentials inside the domain,
with some points being very close to the boundary.
The value of the layer potential can be difficult
to evaluate at such points because of the
near-singularity in the integral kernel. We
use a sixth order quadrature by expansion method
\cite{klockner2013,rachh2016} to evaluate these
integrals efficiently and accurately.

In this section we consider two test cases.
The first example is a convergence study for a simply
connected domain to demonstrate the order
of convergence for the discretized integral equation.
We also compare the condition numbers 
for the discretized linear systems corresponding to 
our integral representation
and the existing integral representation by Farkas~\cite{Farkas89}
for a family of simply connected domains with increasing curvature.
For the second example, we demonstrate the order of convergence and
compute the Green's function for a multiply connected domain.
For all examples except the computation of the Green's function,
the boundary data $f$ and $g$ are chosen corresponding to 
a known solution of the biharmonic equation in $D$ given by

\begin{equation} \label{eq:sourceformula}
w(\bx) = \sum_{j=1}^{n_{s}} q_{j} |\bx - \bs_{j}|^2 \log{|\bx-\bs_{j}|} \, ,
\end{equation}
where $q_{j}$ are uniformly chosen from $[0,1]$.
Let $\wcomp(\bt)$ denote the computed solution at targets 
$\bt$ in the interior of $D$, and let $\eps$ denote an estimate for the error
given by
$$
\eps = \frac{
\sqrt{\sum_{j=1}^{n_{t}} (\wcomp(\bt_{j}) - w(\bt_{j}))^2} }{
\sqrt{\sum_{j=1}^{n_{t}}
w(\bt_{j})^2} } \, .
$$

\subsection{Simply connected domain examples}
Let $D$ denote the interior of a rounded rectangular
bar with length $a=1$, height $b=0.5$, and vertices at
$(0,0),(a,0),(a,b),(0,b)$.
Following the procedure discussed in~\cite{mikecornerrounding},
the corners are rounded using the Gaussian kernel
$$
\phi(x) = \frac{1}{\sqrt{2\pi h}} e^{-x^2/(2 h^2)} \, ,
$$
with $h=0.05$. 
The boundary data $f$ and $g$ are chosen corresponding to a
known solution $w$, defined as in~\cref{eq:sourceformula}, 
with four sources $\bs_{j}$ located at
\begin{align*}
\bs_{1} = \left(a+0.2+ \delta_{1},\frac{b}{2} + \delta_{2}\right) \, , \, \,
&\bs_{2} = \left(\frac{a}{2}+\delta_{3},b+0.2+\delta_{4}\right) \, , \, \, \\
\bs_{3} = \left(-0.2+\delta_{5},\frac{b}{2} + \delta_{6}\right) \, , \, \,
&\bs_{4} = \left(\frac{a}{2} + \delta_{7},-0.2 + \delta_{8}\right) \, ,
\end{align*}
with $\delta_{i}$ chosen uniformly from $[-0.05,0.05]$.

The potential $w$ is evaluated at targets $\bt_{j}$ 
in the interior of $D$,
\begin{align*}
\bt_{1} = \left( \frac{a}{4}+\delta_{9}, \frac{b}{4}+\delta_{10} \right) \, , \, \,
&\bt_{2} = \left( \frac{a}{4}+\delta_{11}, \frac{3b}{4}+\delta_{12} \right) \, , \, \, \\
\bt_{3} = \left( \frac{3a}{4}+\delta_{13}, \frac{b}{4}+\delta_{14} \right) \, , \, \,
&\bt_{4} = \left( \frac{3a}{4}+\delta_{15}, \frac{3b}{4}+\delta_{16} \right) \, , 
\end{align*}
with $\delta_{i}$ again chosen uniformly from $[-0.05,0.05]$.
A sample geometry with sources $\bs_{j}$ and targets $\bt_{j}$ 
and the error $\eps$ as a function of $n_{d}$ are shown
in \cref{fig:simplyconnectedfig}.
The convergence study shows that the error decays like a $20$th order
convergent scheme. 
\begin{figure}
\begin{center}
\resizebox{6cm}{!}
{
\input{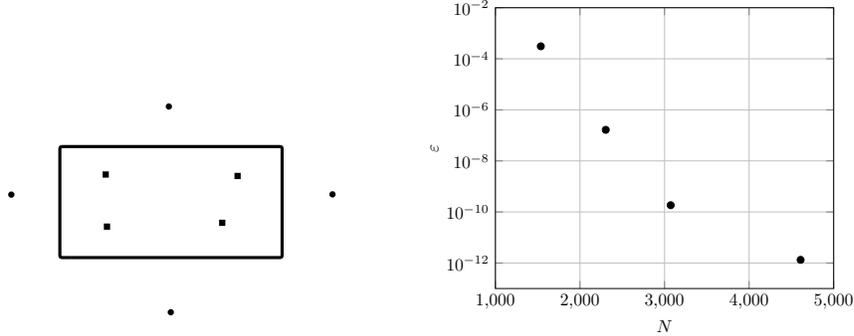}
}
\resizebox{6cm}{!}
{
\begin{tikzpicture}
\begin{axis}[
xmin=1000, xmax=5000, ymin = 1.0E-13, ymax = 1.0E-2,
xlabel={$N$}, ylabel={$\eps$}, 
xmode=linear,ymode=log,grid=both,
]
\addplot[scatter, only marks,
scatter/classes=
{a={mark=*,blue}}
]
table[col sep=comma, meta=label]{
x,y,label
1537,3.09e-4,a
2305,1.65e-7,a
3073,1.84e-10,a
4609,1.33e-12,a
    };
\end{axis}
\end{tikzpicture}
}
\end{center}
\caption{(left): Geometry of simply connected domain for convergence study -- 
the circles denote the location
of the sources $\{\bs_{j}\}$ and the squares denote the location of the
targets $\{\bt_{j} \}$, (right): error $\eps$ as a function of the system
size $N=2 n_{d} +1$.
}
\label{fig:simplyconnectedfig}
\end{figure}

The integral equation presented in this paper is significantly
better conditioned than the existing integral equation discussed
in~\cite{Farkas89} --- particularly when the boundary has regions
with large curvature. 
We plot the condition number $\kappa(\bA)$ of the discretized system 
of integral equations for the representations given by 
both~\cref{eq:IntEqBiharm1new} 
and~\cref{eq:farkIntEq} as a function of the rounding 
parameter $h$ for the rounded rectangular bar in~\cref{fig:condcomp}.
The maximum curvature of the boundary is directly proportional to
$1/h^2$.
The condition number $\kappa(\bA)$
increases linearly with the maximum curvature
for integral equation~\cref{eq:farkIntEq}, but is independent
of the curvature for the integral equation presented in this paper.
\begin{figure}
\begin{center}
\resizebox{8cm}{!}
{
\begin{tikzpicture}
\begin{axis}[
xmin=0.02, xmax=0.3, ymin = 1.0E1, ymax = 2.0E7,
xlabel={$h$}, ylabel={$\kappa(\bA)$}, 
xmode=log,ymode=log,grid=both,scatter/classes=
{a={mark=*,black},
b={mark=square*,black}},
xtick={0.025,0.05,0.1,0.2},
xticklabels={$0.025$,$0.05$,$0.1$,$0.2$}
]
\addplot[scatter, only marks,scatter src=explicit symbolic
]
table[col sep=comma,meta=label]{
x,y,label
0.2,2.22e1,a
0.1,2.47e1,a
0.05,2.71e1,a
0.025,2.95e1,a
0.2,9.59e3,b
0.1,4.05e4,b
0.05,1.83e6,b
0.025,8.79e6,b
  
    };
\end{axis}
\end{tikzpicture}
}
\end{center}
\caption{Condition number $\kappa(\bA)$ of discretized integral equations
~\cref{eq:IntEqBiharm1new} (circles) and~\cref{eq:farkIntEq} (squares)
as a function of corner rounding parameter $h$}
\label{fig:condcomp}
\end{figure}
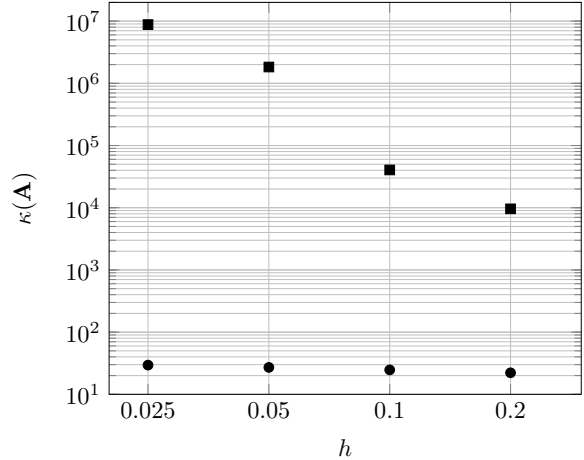

\subsection{Multiply connected domain - examples}
Let $D$ now denote the interior of a multiply connected domain,
where the outer boundary $\Gamma_{0}$ is the boundary
of the rounded rectangular bar discussed above with rounding parameter
$h=0.05$ and the domain has ten circular obstacles $\Gamma_{i}$ 
with radii $r_{0} = 0.04$ and centers located at $\bx_{i}$,
\begin{align*}
\bx_{i} &= \left( 0.12 + (i-1)0.2, 0.15 \right) \quad i=1,2,\ldots 5 \, , \\
\bx_{i} &= \left( 0.08 + (i-6)0.2, 0.35 \right) \quad i=6,7,\ldots 10 \, .
\end{align*}

We will first perform a convergence study, as above,
with a known solution $w$ defined in terms of point
sources according to~\cref{eq:sourceformula}. We create ten sources, 
one located inside each obstacle, whose locations are given by
$$
\bs_{i} = \bx_{i} + (\delta_{2i-1},\delta_{2i}) \, ,
$$
where $\delta_{i}$ are chosen uniformly from $[-0.5r_0,0.5r_{0}]$.
The potential is then tested at twelve targets located at
\begin{align}
\bt_{i} &= \left( 0.22+(i-1)0.2,0.05 \right) + (\delta_{2i-1},\delta_{2i}) 
\quad i=1,2,3,4 \, , \nonumber\\
\bt_{i} &= \left( 0.22+(i-5)0.2,0.25 \right) + (\delta_{2i-1},\delta_{2i}) 
\quad i=5,6,7,8 \, , \label{eq:targmultloc}\\
\bt_{i} &= \left( 0.18+(i-9)0.2,0.45 \right) + (\delta_{2i-1},\delta_{2i}) 
\quad i=9,10,11,12 \, , \nonumber
\end{align}
where $\delta_{i}$ are chosen uniformly from $[-0.5r_{0},0.5r_{0}]$.

We observe $20$th order convergence in the error even for this example.
The error as a function of the number of discretization points along
with a sample geometry are shown in~\cref{fig:mult-connected}.
We also plot the field, and the error in evaluating the potential
in the volume using a sixth order quadrature by expansion method
in~\cref{fig:pot-err}. We note that the error observed near the boundary
is larger than at the targets used for the convergence study; this
is a result of the relatively low order of the quadrature by expansion
method and could be improved by increasing the number of points 
on the boundary.
\begin{figure}
\begin{center}
\resizebox{6cm}{!}
{
\input{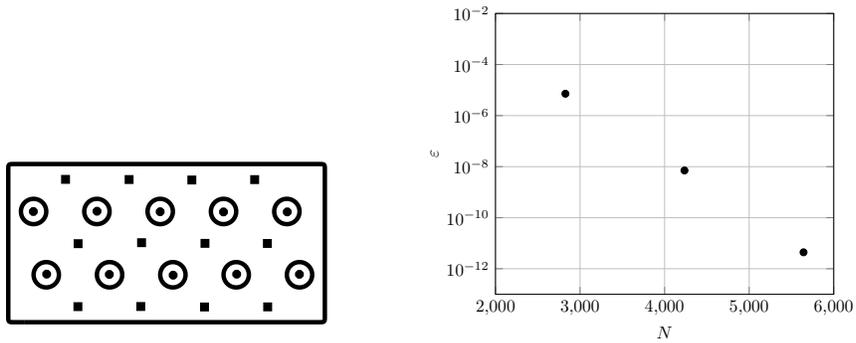}
}
\resizebox{6cm}{!}
{
\begin{tikzpicture}
\begin{axis}[
xmin=2000, xmax=6000, ymin = 1.0E-13, ymax = 1.0E-2,
xlabel={$N$}, ylabel={$\eps$}, 
xmode=linear,ymode=log,grid=both,
]
\addplot[scatter, only marks,
scatter/classes=
{a={mark=*,blue}}
]
table[col sep=comma, meta=label]{
x,y,label
2827,7.19e-6,a
4235,7.08e-9,a
5643,4.41e-12,a
    };
\end{axis}
\end{tikzpicture}
}
\end{center}
\caption{(left): Geometry of multiply connected domain 
for convergence study -- the circles denote the location
of the sources $\{\bs_{j}\}$ and the squares denote the location of the
targets $\{\bt_{j} \}$, (right): error $\eps$ as a function of the system
size $N=2 n_{d} +N+1$.
}
\label{fig:mult-connected}
\end{figure}

\begin{figure}
\begin{center}
\includegraphics[width=6cm]{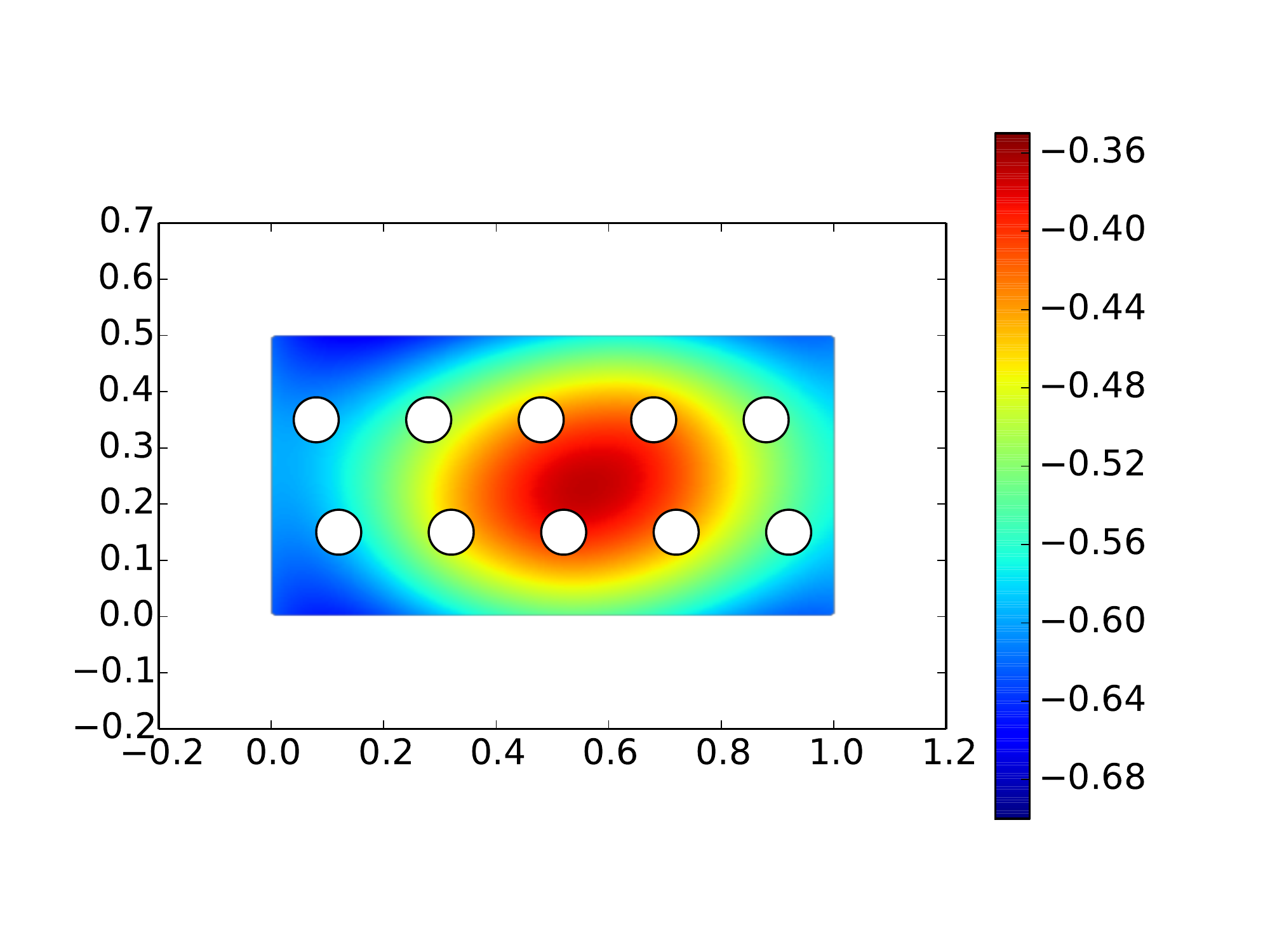}
\includegraphics[width=6cm]{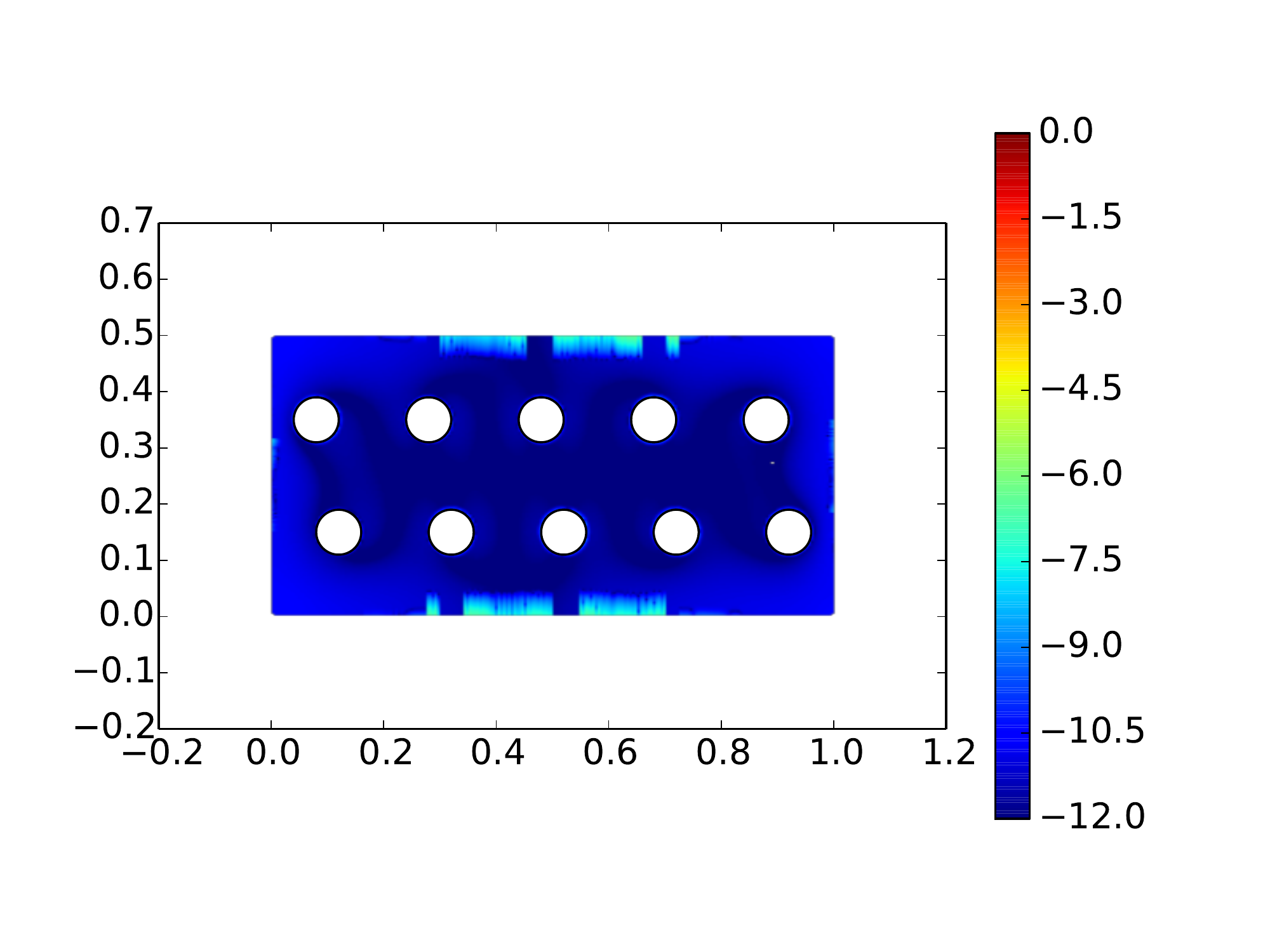}
\end{center}
\caption{(left): Known biharmonic potential due to sources
at $\{\bs_{j}\}$ and (right) absolute pointwise error
$|\wcomp(\bt) - w(\bt)|$ for targets in the interior of 
$D$.  
}
\label{fig:pot-err}
\end{figure}

For the final example, we compute the function $w$ 
which satisfies the PDE
\begin{align}
 -\Delta^2 w = \sum_{j=1}^{12} \delta_{\bx=\bt_{j}} &\quad \bx \in D  
\nonumber\\
 w = 0 &\quad \bx \in \Gamma \label{eq:domgreenfun}\\ 
 \frac{\partial w}{\partial n} =0 &\quad \bx \in \Gamma \nonumber  \, ,
\end{align}
where $\delta_{\bx=\by}$ is the two dimensional radially symmetric 
Dirac delta function centered at $\by$ and 
$\{\bt_{j}\}$ are defined in~\cref{eq:targmultloc}.
This function describes the vertical displacement 
of an isotropic and homogeneous thin clamped plate with
a transverse load given by point 
forces at the points $\bt_j$. It is also, by definition,
a linear combination of the domain Green's function $G^D$, 
as in
$$
w(\bx) = \sum_{j=1}^{12} G^{D}(\bx,\bt_{j}) \, .
$$

To compute $w$, we first obtain a particular 
solution $w_{p}$ which satisfies the PDE in the volume and add
to it the solution of a homogeneous problem $w_{h}$ to fix
the boundary conditions. We have
$$
w(\bx) = w_{p}(\bx) + w_{h}(\bx) \, ,
$$
where
$$
w_{p}(\bx) = \sum_{j=1}^{12} G^{B} (\bx,\bt_{j}) \, ,
$$
and $w_{h}$ satisfies the following homogeneous biharmonic equation,
\begin{align}
 -\Delta^2 w_{h} = 0 &\quad \bx \in D 
\nonumber\\
 w_{h} = -w_{p} &\quad \bx \in \Gamma \\ 
 \frac{\partial w_{h}}{\partial n} = -\frac{\partial w_{p}}{\partial n} &\quad \bx \in \Gamma \nonumber  \,.
\end{align}
We plot the computed solution in~\cref{fig:greenfunplot}.
\begin{figure}
\begin{center}
\includegraphics[width=.8\textwidth]{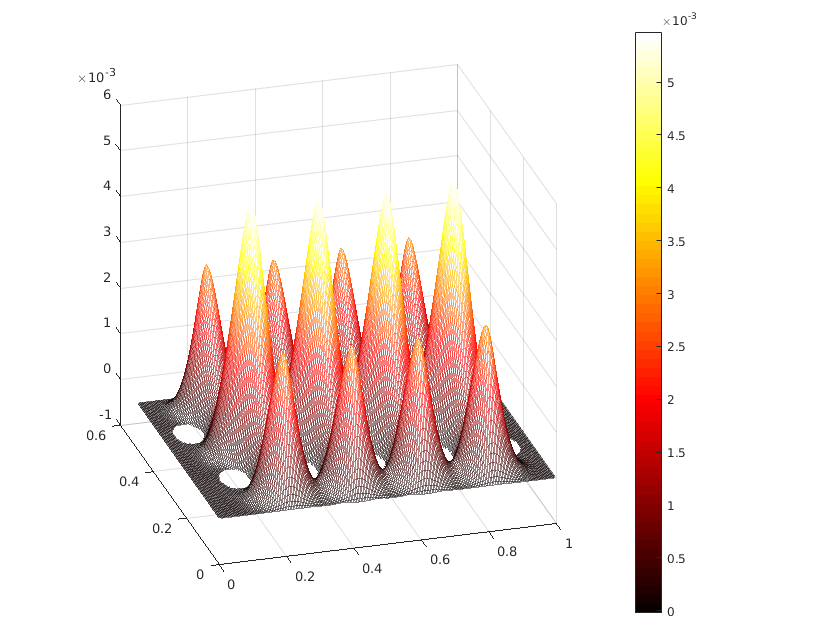}
\end{center}
\caption{Biharmonic domain green's function satisfying equations~\cref{eq:domgreenfun}}
\label{fig:greenfunplot}
\end{figure}

\section{Conclusion} \label{sec:conclusion}

We have presented an integral representation
for the biharmonic Dirichlet problem which
is stable for domains which have a boundary
with high curvature and is applicable to
domains which are multiply connected. The
representation is based on converting the
Dirichlet problem into a problem with
velocity boundary conditions, so that classical
representations for the velocity boundary
value problem can be used. While the
technique of \cite{Farkas89} --- in which
integral kernels are chosen by optimizing over
the derivatives of an appropriate Green's
function --- is general and powerful, the
spectral properties of the resulting operator
are undesirable for boundaries with high curvature
or a corner. Indeed, it seems intuitive that
all direct representations for the biharmonic
Dirichlet problem should suffer in some way:
such an approach asks that one of the integral
kernels be singular enough to result in a
second kind Fredholm equation for the value of the
layer potential and smooth enough to result
in a first kind Fredholm equation for the
normal derivative of the layer potential.

While some of the above is specific
to the biharmonic equation, in particular
the use of Goursat functions, it is
reasonable to expect the
approach to generalize to other high
order elliptic problems as well. In particular,
there are representations for the modified
Stokes equations which are analogues of the
completed double layer representation
used here \cite{pozrikidis1992boundary}.
The extension of this method to three dimensions
is a topic of ongoing research and will be
reported at a later date.

\section*{Acknowledgments}
M.~Rachh's work was supported by 
the U.S. Department of Energy under contract DEFG0288ER25053, the Office
of the Assistant Secretary of Defense for Research and Engineering and
AFOSR under NSSEFF Program Award FA9550-10-1-0180, and
the Office of Naval Research under award N00014-14-1-0797/16-1-2123.
T.~Askham's work was supported by
the U.S. Department of Energy under contract DEFG0288ER25053, 
the Office
of the Assistant Secretary of Defense for Research and Engineering and
AFOSR under NSSEFF Program Award FA9550-10-1-0180, and
the Office
of the Assistant Secretary of Defense for Research and Engineering and
AFOSR under award FA95550-15-1-0385.
The authors would like to thank Leslie Greengard for
suggesting this problem and both Leslie Greengard and
Shidong Jiang for many useful discussions.

%
%

\bibliography{dirichlet-biharm-2017}

\begin{thebibliography}{10}
\expandafter\ifx\csname url\endcsname\relax
  \def\url#1{\texttt{#1}}\fi
\expandafter\ifx\csname urlprefix\endcsname\relax\def\urlprefix{URL }\fi
\expandafter\ifx\csname href\endcsname\relax
  \def\href#1#2{#2} \def\path#1{#1}\fi

\bibitem{kress1999linear}
R.~Kress, Linear Integral Equations, Applied Mathematical Sciences, Springer
  New York, 1999.

\bibitem{greengard1987}
L.~Greengard, V.~Rokhlin, A fast algorithm for particle simulations, Journal of
  computational physics 73~(2) (1987) 325--348.

\bibitem{Farkas89}
P.~Farkas, Mathematical foundations for fast methods for the biharmonic
  equation, ProQuest LLC, Ann Arbor, MI, 1989, thesis (Ph.D.)--The University
  of Chicago.

\bibitem{Jiang11}
S.~Jiang, B.~Ren, P.~Tsuji, L.~Ying, Second kind integral equations for the
  first kind {D}irichlet problem of the biharmonic equation in three
  dimensions, J. Comput. Phys. 230~(19) (2011) 7488--7501.
\newblock \href {http://dx.doi.org/10.1016/j.jcp.2011.06.015}
  {\path{doi:10.1016/j.jcp.2011.06.015}}.

\bibitem{greengard1996integral}
L.~Greengard, M.~C. Kropinski, A.~Mayo, Integral equation methods for stokes
  flow and isotropic elasticity in the plane, Journal of Computational Physics
  125~(2) (1996) 403--414.

\bibitem{power1993completed}
H.~Power, The completed double layer boundary integral equation method for
  two-dimensional stokes flow, IMA Journal of Applied Mathematics 51~(2) (1993)
  123--145.

\bibitem{power1987second}
H.~Power, G.~Miranda, Second kind integral equation formulation of stokes'
  flows past a particle of arbitrary shape, SIAM Journal on Applied Mathematics
  47~(4) (1987) 689--698.

\bibitem{michlin1957integral}
S.~G. Michlin, A.~H. Armstrong, Integral equations and their applications to
  certain problems in mechanics, mathematical physics and technology, London,
  1957.

\bibitem{Askham2017}
T.~Askham, A.~Cerfon,
  \href{http://www.sciencedirect.com/science/article/pii/S002199911730342X}{An
  adaptive fast multipole accelerated poisson solver for complex geometries},
  Journal of Computational Physics (2017) --\href
  {http://dx.doi.org/https://doi.org/10.1016/j.jcp.2017.04.063}
  {\path{doi:https://doi.org/10.1016/j.jcp.2017.04.063}}.
\newline\urlprefix\url{http://www.sciencedirect.com/science/article/pii/S002199911730342X}

\bibitem{stein2016}
D.~B. Stein, R.~D. Guy, B.~Thomases, Immersed boundary smooth extension: a
  high-order method for solving pde on arbitrary smooth domains using fourier
  spectral methods, Journal of Computational Physics 304 (2016) 252--274.

\bibitem{goursat1898equation}
E.~Goursat, Sur l'equation {$\Delta\Delta$u}= 0, Bull. Soc. Math. France 26
  (1898) 236--237.

\bibitem{muskhelishvili1977some}
N.~I. Muskhelishvili, Some basic problems of the mathematical theory of
  elasticity, Springer, 1977.

\bibitem{pozrikidis1992boundary}
C.~Pozrikidis, Boundary Integral and Singularity Methods for Linearized Viscous
  Flow, Cambridge Texts in Applied Mathematics, Cambridge University Press,
  1992.

\bibitem{guenther1988partial}
R.~Guenther, J.~Lee, Partial Differential Equations of Mathematical Physics and
  Integral Equations, Prentice Hall, 1988.

\bibitem{bremer2012}
J.~Bremer,
  \href{http://www.sciencedirect.com/science/article/pii/S1063520311000297}{On
  the nystrã¶m discretization of integral equations on planar curves with
  corners}, Applied and Computational Harmonic Analysis 32~(1) (2012) 45 -- 64.
\newblock \href
  {http://dx.doi.org/http://dx.doi.org/10.1016/j.acha.2011.03.002}
  {\path{doi:http://dx.doi.org/10.1016/j.acha.2011.03.002}}.
\newline\urlprefix\url{http://www.sciencedirect.com/science/article/pii/S1063520311000297}

\bibitem{bremer2010}
J.~Bremer, Z.~Gimbutas, V.~Rokhlin, A nonlinear optimization procedure for
  generalized {G}aussian quadratures, SIAM Journal on Scientific Computing
  32~(4) (2010) 1761--1788.

\bibitem{bremer2010u}
J.~Bremer, V.~Rokhlin, I.~Sammis, Universal quadratures for boundary integral
  equations on two-dimensional domains with corners, Journal of Computational
  Physics 229~(22) (2010) 8259--8280.

\bibitem{klockner2013}
A.~Kl{\"o}ckner, A.~Barnett, L.~Greengard, M.~O'Neil, Quadrature by expansion:
  A new method for the evaluation of layer potentials, Journal of Computational
  Physics 252 (2013) 332--349.

\bibitem{rachh2016}
M.~Rachh, A.~Kl{\"o}ckner, M.~O'Neil, Fast algorithms for quadrature by
  expansion i: Globally valid expansions, arXiv preprint arXiv:1602.05301.

\bibitem{mikecornerrounding}
C.~L. Epstein, M.~O'Neil, Smoothed corners and scattered waves, SIAM Journal on
  Scientific Computing 38~(5) (2016) A2665--A2698.

\end{thebibliography}

\bibliographystyle{elsarticle-num}

\nocite{*}

%
%

\end{document}